\documentclass{article}
\usepackage{graphicx}

\usepackage[fleqn]{amsmath}

\usepackage{amsthm}
\usepackage{amssymb}
\usepackage{amsbsy}
\usepackage{amsfonts}
\usepackage{mathrsfs}
\usepackage{amsmath}
\usepackage[all]{xy}
\usepackage{amstext}
\usepackage{amscd}
\usepackage[dvips]{epsfig}
\usepackage{psfrag}
\usepackage{enumerate}
\usepackage{flafter}
\allowdisplaybreaks

\textwidth168mm
\textheight232mm
\topmargin-1.1033cm
\setlength{\oddsidemargin}{-4mm}
\setlength{\evensidemargin}{-4mm}
\setlength{\unitlength}{1 pt}

\theoremstyle{plain}
\newtheorem{thm}{Theorem}[section]
\newtheorem{prop}[thm]{Proposition}

\newtheorem{lem}[thm]{Lemma}
\theoremstyle{definition}
\newtheorem{exa}[thm]{Example}

\newtheorem{rem}[thm]{Remark}
\newtheorem{defn}[thm]{Definition}
\newtheorem{prob}[thm]{Problem}

\def\Hom{\mathop{\mathrm{Hom}}\nolimits}

\def\F{\mathop{\mathbb{F}}\nolimits}

\newcommand{\lra}{\longrightarrow}
\newcommand{\ra}{\rightarrow}
\newcommand{\Q}{{\Bbb Q}}

\newcommand{\Z}{{\Bbb Z}}

\newcommand{\Ann}{{\rm Ann }}

\newcommand{\X}{{\tilde{X}}}

\newcommand{\id}{{\mathrm{id}}}

\newcommand{\pcc}[2]{\mbox{$\begin{array}{c}
\includegraphics[scale=#2]{#1.pdf}
\end{array}$}}

\begin{document}
\large
\begin{center}
{\bf\Large Skew-rack cocycle invariants of closed 3-manifolds}
\end{center}
\vskip 1.5pc
\begin{center}{\Large Takefumi Nosaka}\end{center}
\vskip 1pc\begin{abstract}\baselineskip=12pt \noindent
We establish a new approach to obtain 3-manifold invariants via Dehn surgery.
For this, we introduce skew-racks with good involution and Property FR, and define
cocycle invariants as 3-manifold invariants.

\end{abstract}

\begin{center}
\normalsize
\baselineskip=11pt
{\bf Keywords} \\
\ \ \ 3-manifolds, surgery, knots, birack \ \ \
\end{center}
\begin{center}
\normalsize
\baselineskip=11pt
{\bf Subject Codes } \\
\ \ \ 57M27, 20J06, 16T25, 19J25\ \ \
\end{center}

\baselineskip=13pt

\large
\baselineskip=16pt

\section{Introduction}
Every closed 3-manifold $M$ with orientation can be obtained from a framed link in the 3-sphere $S^3$ by a Dehn surgery.
Since there is a one-to-one correspondence between closed
3-manifolds and framed links in $S^3$ modulo either the Kirby moves \cite{Kirby} or Fenn-Rourke moves \cite{FR},
any framed link invariant, which is invariant with respect to the moves, is a 3-manifold invariant.
For example, in quantum topology, frameworks from Chern-Simons theory have produced many 3-manifold invariants,
including the concepts of modular categories 
{\it etc}; see, e.g., \cite{RT, Tur} and references therein.
In contrast, from classical views as in algebraic topology, 
the fundamental groups, $\pi_1(M)$, of 3-manifolds, have useful information, and are strong invariants;
furthermore, as in the Dijkgraaf-Witten model \cite{DW}, starting from a finite group $G$, we can
define a certain weight of the set, $\mathrm{Hom}(\pi_1(M), G)$, in terms of group cohomology of $G$.
However, apart from the quantum invariants and fundamental groups, there are few procedures
to yield 3-manifold invariants via Dehn surgery.

In this paper, we establish a new approach to yield 3-manifold invariants in a classical situation.
For this, we focus on a class of skew-racks (see Section \ref{gg2}), which is an algebraic system and a modification of quandles and biracks.
As in quandle theory (see, e.g., \cite{FRS, CEGN, CJKLS, Nosbook}), starting from skew-racks, we can also define a set of
colorings of framed links and weights of the set, where the weights are evaluated by birack 2-cocycles, and
are called {\it a cocycle invariant} as a framed link invariant (see Section \ref{gg2} for details).
The point of this paper is to find skew-racks such that
the cocycle invariant is stable under the Fenn-Rourke moves.
Following the point, we define Property FR of skew-racks (Definition \ref{def4}),
and show (Theorem \ref{m3nthm4} and Proposition \ref{prop8}) that, in some situations, the associated cocycle invariant gives rise to a 3-manifold invariant.
In Section \ref{SCGSS}, we establish several examples of skew-racks with Property FR; for example,
from a group $G$ and an involutive automorphism $\kappa: G \ra G$, we can define a skew-racks with Property FR (Examples \ref{ss2} and \ref{exa28}).

Using the examples of skew-racks, we compute the set of colorings and
compute some cocycle invariants. For example, we determine the invariants of some Brieskorn 3-manifolds as integral homology 3-spheres 
(Example \ref{exa3355}).
Following the computations, we discuss a comparison with the Dijkgraaf-Witten invariant and pose some problems (Problem \ref{p3p64}).
Finally, we attempt to make an application from the skew-racks above;
Precisely, Section \ref{SCG211} suggests elementary approaches to find some 3-manifolds, which are not the results of surgery of any knot in $S^3$; however, unfortunately, in this paper,
we find no examples of their applications.








\

\noindent
{\bf Conventional notation.}
Every 3-manifold is understood to be connected, smooth, oriented, and closed.

\subsection*{Acknowledgment}
The author is grateful to Nozomu Sekino, Kimihiko Motegi and Motoo Tange for giving him valuable comments on Dehn surgery.



\section{Symmetric skew-racks and birack cocycle invariants}\label{gg233}
We introduce skew-racks, as a special class of biracks (see \cite{FR, CEGN} for the definition of biracks). We define {\it a skew-rack} to be a triple of a set $X$, a binary operation $\lhd: X \times X \ra X $, and a bijection $\kappa : X \ra X$ satisfying
\begin{enumerate}[(SR1)]
\item For any $a,b \in X$, the equality $\kappa (a \lhd b)= \kappa (a) \lhd \kappa( b)$ holds.
\item For any $b \in X$, the map $X \ra X$ that sends $x$ to $x\lhd b$ is a bijection.
\item For any $a,b,c \in X$, the distributive law $(a\lhd b) \lhd c=(a \lhd \kappa(c))\lhd (b \lhd c)$ holds. 
\end{enumerate}
As a special case, if $\kappa = \mathrm{id}_X $, the definition of skew-racks coincides with that of racks.
We often denote the inverse map $\bullet \lhd b$ of the bijection by $ \bullet \lhd^{-1} b.$
Furthermore,
as a slight generalization of symmetric quandles in \cite{Kam,KO}, we define {\it a symmetric skew-rack} to be a pair of a skew-rack $(X,\lhd, \kappa) $
and an involution $\rho:X \ra X$ satisfying
\begin{enumerate}[(SS1)]
\item For any $a, b \in X $, the equalities $(a \lhd b ) \lhd \rho (b)= a $ and $ \rho(a) \lhd \kappa(b) =\rho( a \lhd b)$ hold.
\item The involutivity $\rho \circ \rho = \kappa \circ \kappa = \mathrm{id}_X$ and the commutativity $ \rho \circ \kappa = \kappa \circ \rho$ hold.
\end{enumerate}
Such a $\rho$ is called {\it a good involution} as in \cite{Kam}.
If $\kappa = \mathrm{id}_X $ and the equality $a \lhd a =a$ holds for any $a\in X$,
the definition of symmetric biracks is exactly the original definition of symmetric quandles \cite{Kam}.
Let us give examples of symmetric skew-racks.

\begin{exa}\label{ss1} Let $X$ be a group $G$, and $\kappa : G \ra G$ be an involutive automorphism.
Define $x \lhd y$ by $ \kappa (y^{-1}) xy$, and $\rho(x)$ by $ x^{-1}$.
Then, these maps define a symmetric skew-rack structure on $X$.
\end{exa}
\begin{exa}\label{ss2}
Let $K$ be a group, and $f: K \ra K$ be an involutive automorphism.
Consider the direct products $X= K \times K$ and $\kappa = f\times f$.
Define $(x,a) \lhd (y,b)$ by $ (f(x) y^{-1} by,f(a) ) $, and $\rho(x,a)= ( f(x), f(a)^{-1})$.
Then, these $X,\lhd, \kappa, \rho$ define a symmetric skew-rack structure on $K \times K$,
and $ \mathrm{Tw}(x,a) =(a^{-1}x,a) $.
As seen in Sections \ref{SCG2}--\ref{SCG211}, this skew-rack plays a key role in this paper.
\end{exa}
\noindent
Finally, we see that, if $X$ is of finite order, the axiom (SS3) is obtained from the other axioms. 
\begin{prop}\label{propss2}Let $(X,\lhd, \kappa) $ be a skew-rack satisfying $ \kappa^2 = \mathrm{id}_X $ as in (SS2).
Define the map $\mathrm{Tw}: X \ra X$ by setting $ \mathrm{Tw}(x) = \kappa(x)\lhd^{-1}\kappa(x)$ .
Then, the map is bijective, where the 
inverse is the map $X \ra X $ that sends $ x $ to $\kappa (x) \lhd x$. 
\end{prop}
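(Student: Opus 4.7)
The plan is to verify the two compositions $\mathrm{Tw}\circ T$ and $T\circ \mathrm{Tw}$ both equal $\mathrm{id}_X$, where $T(x):=\kappa(x)\lhd x$ denotes the proposed inverse. The whole argument pivots on a single preparatory identity obtained from (SR3) by setting $b=\kappa(c)$: the right-hand side becomes $(a\lhd\kappa(c))\lhd(\kappa(c)\lhd c)$ while the left-hand side reads $(a\lhd\kappa(c))\lhd c$, and because $\bullet\lhd\kappa(c)$ is bijective by (SR2), letting $a$ range over $X$ makes $a\lhd\kappa(c)$ range over $X$ too, so
\[
u\lhd c \;=\; u\lhd(\kappa(c)\lhd c)\qquad\text{for all }u,c\in X.
\]

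The easier direction is $\mathrm{Tw}(T(x))=x$. Applying (SR1) together with $\kappa^2=\mathrm{id}_X$ gives $\kappa(T(x))=\kappa(\kappa(x)\lhd x)=x\lhd\kappa(x)$, and specializing the preparatory identity at $(u,c)=(x,\kappa(x))$ yields $x\lhd\kappa(x)=x\lhd(x\lhd\kappa(x))$. By the defining property of $\lhd^{-1}$ and the bijectivity (SR2), this says precisely $(x\lhd\kappa(x))\lhd^{-1}(x\lhd\kappa(x))=x$, which is $\mathrm{Tw}(T(x))=x$.

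The opposite direction $T(\mathrm{Tw}(x))=x$ takes a little more work, and is where I expect the main obstacle. Write $y:=\mathrm{Tw}(x)$, so the defining relation is $y\lhd\kappa(x)=\kappa(x)$; pushing this through $\kappa$ via (SR1) and $\kappa^2=\mathrm{id}_X$ gives $\kappa(y)\lhd x=x$, i.e.\ $\kappa(y)=x\lhd^{-1}x$. The key auxiliary step is the claim $a\lhd y=a\lhd x$ for every $a$; for this I specialize (SR3) at $b=y$, $c=\kappa(x)$ to obtain
\[
(a\lhd y)\lhd\kappa(x)=(a\lhd x)\lhd(y\lhd\kappa(x))=(a\lhd x)\lhd\kappa(x),
\]
and then cancel $\bullet\lhd\kappa(x)$ using (SR2). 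Combining the two observations,
\[
T(y)=\kappa(y)\lhd y=\kappa(y)\lhd x=(x\lhd^{-1}x)\lhd x=x,
\]
as required. The only recurring subtlety is choosing the specializations of $b$ and $c$ in (SR3) ($b=\kappa(c)$ for the preparatory identity, and $(b,c)=(y,\kappa(x))$ for the main computation) so that (SR2) can upgrade equalities at single arguments to equalities of the right-action functions $a\mapsto a\lhd c$.
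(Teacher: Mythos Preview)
Your proof is correct and follows essentially the same route as the paper's. Your preparatory identity $u\lhd c=u\lhd(\kappa(c)\lhd c)$ is exactly what the paper uses implicitly in the step $((y\lhd^{-1}x)\lhd x)\lhd(x\lhd\kappa(x))=((y\lhd^{-1}x)\lhd x)\lhd\kappa(x)$, and your second direction fills in what the paper dismisses with ``Similarly''; the organization around a reusable identity is a modest improvement in clarity, but the underlying argument is the same.
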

\begin{proof}
Letting $y$ be $ \mathrm{Tw} ( \kappa (x)\lhd x  )$, 
we may show $y= x$. Notice that $x\lhd \kappa (x)= y \lhd (x \lhd \kappa (x))$, which is equal to 
\[  \bigl(( y\lhd^{-1}x) \lhd x \bigr) \lhd (x \lhd \kappa (x))= \bigl(( y\lhd^{-1}x) \lhd x \bigr)\lhd \kappa (x) =y \lhd \kappa (x). \] 
Thus, by (SS2), we have $y=x$. Similarly, we can easily check $\kappa (\mathrm{Tw} ( x))\lhd \mathrm{Tw} ( x) = x$, leading to the 
proof. 
\end{proof}

\section{Preliminaries; colorings and birack cocycle invariants}\label{gg2}
We will define $X$-colorings, although
the definition may be seen as a slight modification of classical $X$-colorings of quandles or biracks; see \cite{CJKLS, CEGN, FRS4}.
Let $D$ be a framed link diagram $D$, and let $ (X,\lhd, \kappa,\rho)$ be a symmetric skew-rack.
Choose orientations $o$ on each component of $D$, and denote by $D^o$ the diagram with the orientations.
In this paper, a {\it semi-arc of $D$} means a path from a crossing to the next crossing along the diagram.
Then, {\it an $X$-coloring} is a map $\mathcal{C}: \{$semi-arc of $D \} \ra X$ such that, for every crossing $\tau $ of $D$, the semi-arcs around $\tau$ satisfy
$\mathcal{C}(\mathcal{\gamma_{\tau}})= \kappa( \mathcal{C}(\mathcal{\beta_{\tau}}) )$ and $\mathcal{C}(\mathcal{\delta_{\tau}})= \mathcal{C}(\mathcal{\alpha_{\tau}}) \lhd \mathcal{C}(\mathcal{\beta_{\tau}})$, where $\alpha_{\tau}, \beta_{\tau}, \gamma_{\tau} ,\delta_{\tau} $ are the semi-arcs as seen in Figure \ref{koutenpn}.
We denote by $\mathrm{Col}_X(D^o)$ the set of $X$-colorings of $D^o$.
Then, as a basic fact in quandle theory (see \cite{CEGN, FRS4}),
if two diagrams $D^o$ and $(D')^{o'}$ are related by a Reidemeister move of type II, type III, or a doubled type I,
then there exists a canonical bijection $\mathcal{B}_{D^o,(D')^{o'}}: \mathrm{Col}_X(D^o) \ra \mathrm{Col}_X((D')^{o'})$.
Moreover, thanks to the above axioms (SS1) and (SS2), if $D^{o'}$ is the same diagram $D$ with opposite orientation,
the correspondence $a \mapsto \rho(a)$ on the color of each semi-arcs on the opposite component
defines a bijection $\mathcal{B}_{D^o,D^{o'}}: \mathrm{Col}_X(D^o) \ra \mathrm{Col}_X(D^{o'})$. 
In particular, the set $\mathrm{Col}_X(D^o)$ up to bijections does not depend on the choice of orientations of $D$.
Accordingly, we sometimes use the expression $\mathrm{Col}_X(D) $ instead of $ \mathrm{Col}_X(D^o).$
Finally, we should emphasize that the map $ \mathrm{Tw}^{ \pm 1}$ in Proposition \ref{propss2} corresponds
with an addition of a $(\mp 1)$-framing in an arc as in the Reidemeister move of type I. 
\begin{figure}[htpb]
\begin{center}
\begin{picture}(100,26)
\put(-22,27){\large $\alpha_{\tau} $}
\put(-22,-13){\large $\gamma_{\tau} $}
\put(13,-13){\large $\delta_{\tau} $}
\put(13,25){\large $\beta_{\tau} $}

\put(-36,3){\pcc{kouten4}{0.27304}}

\put(131,-13){\large $\beta_{\tau} $}
\put(93,27){\large $\gamma_{\tau} $}
\put(93,-13){\large $\alpha_{\tau} $}
\put(131,27){\large $\delta_{\tau} $}
\end{picture}
\end{center}
\caption{\label{koutenpn} Positive and negative crossings, and eight semi-arcs with labeling.}
\end{figure}
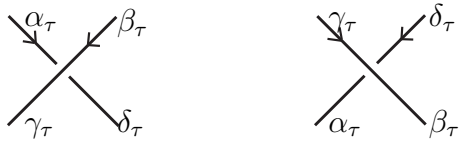

Next, we observe cocycle invariants of a symmetric skew-rack $X$. 
According to \cite{FRS,CJKLS,CEGN},
a map $ \phi: X^2 \ra A$ for some abelian group $A$ is called {\it a birack 2-cocycle}, if
\begin{equation}\label{condition}\phi(a, b) + \phi(a \lhd b , c) = \phi ( a,\kappa (c)) +\phi( a \lhd \kappa (c) , b\lhd c), \ \ \ \ \ \phi ( b,c) = \phi ( \kappa (b),\kappa (c)), \end{equation} 
hold for any $ a,b,c \in X$.
Then, 
we define {\it the weight (of $\tau$)}, $ \Phi(\tau) $, with respect to a crossing $ \tau$ on $D $ to be $ \varepsilon_{\tau} \tau ( \mathcal{C}(\alpha_{\tau}) , \mathcal{C}(\beta_{\tau}) ) \in A $, where $\varepsilon_{\tau} $
is the sign $ \tau$ as in Figure \ref{koutenpn}; we further define $ \Phi_D (\mathcal {C}) \in A$ to be the sum $ \sum_{\tau} \Phi(\tau) $, where
$\tau $ runs over every crossing on $D $.
Then, as is known (see, e.g., \cite{CEGS, CEGN}), if two diagrams $D$ and $D' $ are related by a Reidemeister move of type II, type III or a doubled type I move,
then $ \Phi_{D'} \circ \mathcal{B}_{D^o,(D')^{o'} }= \Phi_D$ holds as a map $\mathrm{Col}_X(D^{o} ) \ra A $.
In other words, the map $\Phi_D:\mathrm{Col}_X(D^o) \ra A$ up to bijections is an invariant of framed links with orientation. As in \cite{CJKLS, CEGN}, we call the map $ \Phi $ {\it the (birack) cocycle invariant}.

Next, as an analogy of symmetric cocycle invariants in \cite{Kam, KO},
we discuss symmetric birack cocycles. We say a birack 2-cocycle $ \phi: X^2 \ra A$ to be
{\it symmetric} if
\[ \phi (a, b)= - \phi (a \lhd b, \rho(b)) =- \phi (\rho(a), \kappa (b)) \in A, \]
for any $a,b \in X$. Then, by a similar discussion to \cite[Theorem 6.3]{KO}, we can easily check that the weight $ \Phi(\tau)$
does not depend on the choice of orientations of $D$; neither does the map $\Phi_{D}:\mathrm{Col}_X(D^o) \ra A$ up to bijections.
In conclusion, the cocycle invariant $\Phi_{D}:\mathrm{Col}_X(D^o) \ra A$ up to bijections is an invariant of framed links.


Finally, we briefly review surgery on links and Fenn-Rourke moves \cite{FR}.
Let us regard a framed link diagram as the surgery on the framed link in the 3-sphere.
As folklore, every closed 3-manifold $M$ can be expressed as
the result of $S^3$ of surgery on a framed link.
Furthermore, it is shown (see \cite{FR}) that two framed links in $S^3$ have orientation-preserving homeomorphic results of surgery if and only if the framed links are related by a sequence of Fenn-Rourke moves and isotopies, where
{the Fenn-Rourke move} is an operation between the framed links shown in Figure \ref{kdn66}.
Throughout this paper, for a framed link diagram $D$ of a link $L$, we denote by $M_D$ the result of surgery of $S^3$ on $L$.

\section{Topological invariants from skew racks with Property FR.}\label{SCGSS}
It is reasonable to find appropriate skew-racks, which yield birack cocycle invariants that are invariant with respect to the Fenn-Rourke moves.
The purpose of this section is to define skew-racks with Property FR, and colorings of closed 3-manifolds.

For $\varepsilon \in \{ \pm 1\}$ and $ a_1,\dots, a_n \in X$,
let us consider the bijection $A_{a_1, \dots, a_n }: X \ra X$ that sends $x$ to $(
\cdots((x \lhd a_1) \lhd a_2 ) \lhd \cdots ) \lhd a_n$,
and define the following subsets:
\[\mathrm{Ann}^{+1}(A_{a_1, \dots, a_n }) := \{ x \in X \mid \kappa^{n+1} (x) = A_{a_1, \dots, a_n } (x) \lhd \kappa^{n+1} (x) \}, \]
\begin{equation}\label{pp033} \mathrm{Ann}^{-1}(A_{a_1, \dots, a_n }) := \{ x \in X \mid \kappa^{n+1} (x) \lhd  \kappa \bigl( A_{a_1, \dots, a_n } (x) \bigr) = A_{a_1, \dots, a_n } (x) \}. \end{equation}
As the case $n=0$, we define $\Ann (X) $ to be the subset $\{ x \in X \mid x \lhd \kappa (x) = \kappa(x) \}$

\begin{defn}\label{def4} A symmetric skew-rack $(X,\lhd, \kappa, \rho)$ is said to have {\it Property FR} if it satisfies the followings:
\begin{enumerate}[(FR1)]
\item The subset $\Ann (X) $ is not empty, and 
bijective to $\mathrm{Ann}^{ \varepsilon} (A_{a_1, \dots, a_n }) $ for any $ a_1,\dots, a_n \in X$ and $ \varepsilon \in \{ \pm 1\}$.
\item For any $ a_1,\dots, a_n \in X $ and $x \in \mathrm{Ann}^{+1}(A_{a_1, \dots, a_n }) , y \in \mathrm{Ann}^{-1}(A_{a_1, \dots, a_n }) $, the equalities
\begin{equation}\label{pp0} \kappa^{n+i} (a_i)= A_{a_1, \dots, a_n } \bigl( \kappa^{i+1} (a_i) \lhd x \bigr), \end{equation}
\begin{equation}\label{pp03}
 \kappa^{n+i}(a_i) \lhd \kappa^{ n+1 }(y) = A_{a_1 \kappa (y), a_2 \lhd \kappa^2 (y), \dots, a_n \lhd \kappa^n (y) }(\kappa^{i+1} (a_i) ) \end{equation}
hold, where $i \leq n$ is arbitrary.
\end{enumerate}
\end{defn}
Let us analyze the set of colorings of skew-racks with Property FR.

\begin{thm}\label{m3nthm4}
Let $(X,\lhd, \kappa, \rho)$ be a symmetric skew-rack with Property FR.
Suppose that two framed link diagrams $D$ and $D'$ are related by a Fenn-Rourke move as in Figure \ref{kdn66},
and take orientations on $D$ and $D'. $

Then, there is a bijection $\mathcal{B}: \mathrm{Col}_X(D) \ra \mathrm{Col}_X(D') \times \Ann (X) $.


In particular, if $X$ is of finite order, then the rational number $ | \mathrm{Col}_X(D)|/|\mathrm{Ann}(X) |^{\# D} \in \Q $ givesrise to a topological invariant of closed 3-manifolds.
\end{thm}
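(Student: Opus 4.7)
The plan is to reduce the statement to a purely local computation inside the Fenn-Rourke disk, use axioms (FR1) and (FR2) to construct the bijection there, and then deduce the invariance statement by a counting argument. The Fenn-Rourke move is local: there is a $3$-ball $B$ such that $D$ and $D'$ agree outside $B$, while inside $B$ on the $D$-side one sees $n$ vertical strands together with a small $\varepsilon$-framed meridian unknot $U$, and on the $D'$-side just the $n$ strands with an extra $(-\varepsilon)$ full twist. Any $X$-coloring of $D$ or $D'$ restricts to a coloring of the $2n$ arcs meeting $\partial B$, so it suffices, for each choice of incoming colors $a_1,\dots,a_n\in X$ on the upper $\partial B$-arcs, to produce a bijection between the local extensions on the $D$-side and the product of the local extensions on the $D'$-side with $\mathrm{Ann}(X)$.

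I would begin on the $D$-side. Tracking the color $x$ along the meridian $U$ as it traverses the $n$ strands, the successive crossings apply $\lhd a_i$-operators (combined with powers of $\kappa$ coming from the over/under-sign rule for $X$-colorings), and the condition that the arc of $U$ closes up consistently together with its $\varepsilon$-framing self-crossing is precisely the membership $x\in \mathrm{Ann}^{\varepsilon}(A_{a_1,\dots,a_n})$ appearing in (\ref{pp033}). Axiom (FR1) then identifies this set with $\mathrm{Ann}(X)$ bijectively and independently of $(a_1,\dots,a_n)$.

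Next, I would verify that the strands exiting $B$ acquire the same colors on both sides of the move. On the $D$-side, the outgoing colors are obtained by successive $\lhd x$- (or $\lhd\kappa(x)$-) operations applied to $\kappa^{i+1}(a_i)$, yielding the right-hand sides appearing in (\ref{pp0}) and (\ref{pp03}). On the $D'$-side, the full twist rigidly prescribes the outgoing colors in terms of $(a_1,\dots,a_n)$ alone: for $\varepsilon=+1$ these are $\kappa^{n+i}(a_i)$, and for $\varepsilon=-1$ they are $\kappa^{n+i}(a_i)\lhd \kappa^{n+1}(y)$. The two equalities in (FR2) are designed precisely so that these two presentations agree. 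Once matched, a local $D$-coloring is exactly the data of a local $D'$-coloring together with an element of $\mathrm{Ann}(X)$, and assembling over all boundary data yields the global bijection $\mathcal{B}$. The invariance statement now follows because a Fenn-Rourke move changes the component count by exactly one, so that $|\mathrm{Col}_X(D)|=|\mathrm{Col}_X(D')|\cdot|\mathrm{Ann}(X)|$ combined with $\#D=\#D'\pm 1$ cancels to give invariance of $|\mathrm{Col}_X(D)|/|\mathrm{Ann}(X)|^{\#D}$; Reidemeister II, III and doubled type I preserve both $|\mathrm{Col}_X(D)|$ and $\#D$, so the quotient is a well-defined closed $3$-manifold invariant.

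The main obstacle will be the bookkeeping in the middle step: one must fix conventions for orientations of the strands and of $U$, pick a base-point on $U$, and track the evolution of its color through each of the $2n$ crossings (over versus under) using both $\lhd$ and $\kappa$. The two cases $\varepsilon=\pm 1$ have to be handled separately and matched respectively with $\mathrm{Ann}^{+1}$ and $\mathrm{Ann}^{-1}$; sign errors or mismatched $\kappa$-powers would falsify the correspondence, so this is where the argument is most delicate. Once the local bijection is verified, the counting identity needed for the last sentence of the theorem is routine.
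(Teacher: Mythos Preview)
Your proposal is correct and follows essentially the same route as the paper: reduce to the local Fenn-Rourke disk, identify the closure condition on the meridian color as membership in $\mathrm{Ann}^{\varepsilon}(A_{a_1,\dots,a_n})$, invoke (FR1) for the bijection with $\mathrm{Ann}(X)$, and use (FR2) to match the outgoing strand colors on the two sides. The paper's proof is terser---it only writes out the $\varepsilon=+1$ case explicitly and leaves both the $\varepsilon=-1$ case and the final counting step implicit---so your write-up is in fact a slightly more complete version of the same argument.
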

\begin{proof} 
Take arcs $\gamma, \alpha_i$'s and $\beta_i$'s as in Figure \ref{kdn66}. We may assume that the framing of the arc $\gamma$ is $+1$
since the same proof similarly runs well in the negative case.
Furthermore, by the properties of good involutions, the coloring conditions are independent of the choices of orientations of $D$.
Thus, we fix orientations of $ D$ and $D'$ as in Figure \ref{kdn66}.

\begin{figure}[htpb]
\begin{center}
\begin{picture}(100,96)

\put(-142,63){\Large $D$}
\put(-142,32){\large $\gamma$}
\put(92,63){\Large $D'$}

\put(-112,78){\large $\alpha_1 $}
\put(-88,78){\large $\alpha_2 $}
\put(-32,78){\large $\alpha_n $}
\put(-112,-13){\large $\beta_1 $}
\put(-88,-13){\large $\beta_2 $}
\put(-32,-13){\large $\beta_n $}
\put(-9,33){\large $\pm 1$-framing}
\put(139,35){\large $\mp 1$-framing}
\put(74,33){\large $\longleftrightarrow$}

\put(116,43){\pcc{FRmoves2}{0.41304}}
\put(-136,33){\pcc{FRmoves1}{0.41304}}

\put(125,78){\large $\alpha_1'$}
\put(208,78){\large $\alpha_n '$}
\put(149,78){\large $\alpha_2' $}
\put(125,-13){\large $\beta_1' $}
\put(208,-13){\large $\beta_n' $}
\put(149,-13){\large $\beta_2' $}

\end{picture}
\end{center}
\caption{\label{kdn66} Fenn-Rourke moves, and labeled semi-arcs.}
\end{figure}
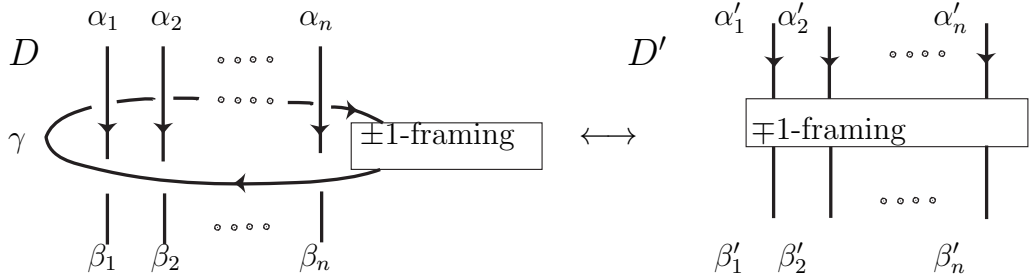

Given an $X$-coloring $\mathcal{C} \in \mathrm{Col}_X(D^o)$, we will construct another $X$-coloring of $(D')^{o'}$.
Define $a_i :=\mathcal{C}( \alpha_i) $, $b_i :=\mathcal{C}( \alpha_i) $, and $x:= \mathcal{C}( \gamma) $.
The coloring condition on the arc $\gamma$ 
is $ \kappa^{n+1} (x) = A_{a_1, \dots, a_n }(x) \lhd \kappa^{n+1} (x)$; hence, $x \in \Ann (A_{a_1, \dots, a_n }) $.
Therefore, thanks to \eqref{pp0},
the map which sends $\alpha_i' $ to $ \mathcal{C}( \alpha_i) =a_i$ defines an $X$-coloring $\mathcal{C}' \in \mathrm{Col}_X((D')^{o'}) $.

Conversely, given an $X$-coloring $\mathcal{C}'$ of $(D')^{o'} $ and $x \in \mathrm{Ann}^{+1}(A_{a_1, \dots, a_n }) \neq \emptyset $,
we can define an $X$-coloring $\mathcal{C}$ of $ D$ that sends $\alpha_i $ to $ \mathcal{C}( \alpha_i') $ and $\gamma$ to $x$.
Thus, the correspondence $ \mathcal{C} \mapsto \mathcal{C}'$ gives the required bijection $\mathcal{B}$.
\end{proof}
Before going to the next section, we now discuss the triviality of the invariants up to link homotopy.
For this, consider the permutation group, $\mathrm{Bij}(X)$, of a skew-rack $X$,
and define a subgroup generated by the following set:
\begin{equation}\label{popo}\{ ( \kappa (\bullet) \lhd a) \mid a \in X \} \cup 
\{ ( \bullet \lhd^{\epsilon_1 } a_1) \lhd^{\epsilon_2} a_2 \mid a_i \in X, \epsilon_i \in \{ \pm 1\} \}. \end{equation}
The subgroup canonically has the right action on $X$.
We denote the subgroup by $\mathrm{Inn}^{\rm even}_{\kappa }(X)$.
We say a skew-rack $(X,\lhd, \kappa)$ with Property FR to be {\it $f$-link homotopic},
if $ x \lhd^{\varepsilon} \kappa (x) = x \lhd^{\varepsilon } (x \cdot g)$ holds for any $x \in X, g \in \mathrm{Inn}^{\rm even}_{\kappa }(X) ,\varepsilon \in \{\pm 1 \} $.
\begin{prop}\label{pm3nthm4}
Suppose that a symmetric skew-rack $(X,\lhd, \kappa, \rho)$ with Property FR
is $f$-link homotopic.
Then, if two framed link diagrams $D$ and $D'$ are transformed by an operation in Figure \ref{kdn2}, 
then there is a bijection $\mathcal{B}_{f}: \mathrm{Col}_X(D^o) \ra \mathrm{Col}_X((D')^{o'}) $.
\end{prop}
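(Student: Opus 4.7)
The plan is to adapt the proof of Theorem \ref{m3nthm4} to the link-homotopy setting, using the $f$-link homotopy hypothesis in place of Property FR. Since the operation in Figure \ref{kdn2} is a local move affecting only one component $L_0$ of the link (this being a link-homotopy-type move), I would focus on a small neighborhood $U$ of the region where $D$ and $D'$ differ, and leave the coloring unchanged on semi-arcs outside $U$.

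Given an $X$-coloring $\mathcal{C}$ of $D^o$, I would define $\mathcal{B}_f(\mathcal{C})$ to agree with $\mathcal{C}$ on all semi-arcs of $D'$ that naturally correspond to semi-arcs of $D$ outside $U$, and to assign colors inside $U$ using the crossing relations of $D'$. Consistency hinges on the $f$-link homotopy identity: let $x$ be the color on the relevant incoming under-strand of $L_0$ at the modified crossing, and let the corresponding over-strand have color $x\cdot g$, where $g \in \mathrm{Inn}^{\rm even}_{\kappa}(X)$ is the monodromy accumulated by traversing $L_0$ between the two semi-arcs. Then
\[
x \lhd^{\varepsilon}(x \cdot g) \;=\; x \lhd^{\varepsilon} \kappa(x),
\]
a value that depends only on $x$ and the sign $\varepsilon$, not on $g$ or on the choice of $D$ versus $D'$. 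This ensures that the new color at the modified crossing agrees with the values on the boundary of $U$, yielding a well-defined $X$-coloring of $(D')^{o'}$. The inverse $\mathcal{B}_f^{-1}$ is constructed by the same recipe applied to $D'$, using the symmetry of the $f$-link homotopy hypothesis in $\varepsilon = \pm 1$.

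The main obstacle is verifying that the monodromy element $g$ genuinely lies in $\mathrm{Inn}^{\rm even}_{\kappa}(X)$. This requires tracking how over-strand and under-strand traversals along $L_0$ contribute to the generating set in \eqref{popo}: under-passages produce generators of type $(\bullet \lhd^{\pm 1} a_1)\lhd^{\pm 1} a_2$, while pairs of operations involving $\kappa$ combine to yield elements of type $\kappa(\bullet)\lhd a$, consistent with an even total count of $\kappa$-factors along the walk. Once this parity check is in place, the bijectivity of $\mathcal{B}_f$ follows routinely from the symmetric construction of the inverse, and extending the local modification to the full diagram raises no further difficulty since the coloring is fixed outside $U$.
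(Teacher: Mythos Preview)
Your outline matches the paper's argument in spirit: fix colors outside the local region, express the two incoming strand colors as $\kappa(a)$ and $a\cdot g$ for some $g\in\mathrm{Inn}^{\rm even}_{\kappa}(X)$, and invoke the $f$-link homotopy identity to show the outgoing colors agree for $D$ and $D'$. The paper does exactly this, but concretely: it sets $\mathcal{C}(\alpha)=\kappa(a)$, writes $\mathcal{C}(\beta)=a\cdot g$, computes $\mathcal{C}(\gamma)=\mathrm{Tw}^{-1}(\kappa(a\cdot g))=(a\cdot g)\lhd\kappa(a\cdot g)$ and $\mathcal{C}(\delta)=\mathrm{Tw}^{-1}(\kappa(a))\lhd(a\cdot g)=(a\lhd\kappa(a))\lhd(a\cdot g)$, and then uses the hypothesis to simplify these to $(a\cdot g)\lhd^{-1}a$ and $a$ respectively, which are precisely the values the $D'$ crossing produces from inputs $a$ and $a\cdot g$.

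Two points where your sketch is looser than it should be. First, the choice $\mathcal{C}(\alpha)=\kappa(a)$ rather than $a$ is not cosmetic: the extra $\kappa$ is what makes the monodromy from $\alpha$ to $\beta$ land in $\mathrm{Inn}^{\rm even}_{\kappa}(X)$ rather than in an odd coset, since each crossing contributes one $\kappa$ and one $\lhd^{\pm1}$ to the walk. Your parity discussion gestures at this but does not pin it down. Second, you describe the mechanism abstractly (``the new color at the modified crossing agrees with the values on the boundary of $U$'') without actually carrying out the two-line computation that verifies it; since the move in Figure~\ref{kdn2} involves two $(-1)$-framings together with a crossing, you need the explicit $\mathrm{Tw}^{-1}$ formula from Proposition~\ref{propss2} to see that the identity $x\lhd^{\varepsilon}\kappa(x)=x\lhd^{\varepsilon}(x\cdot g)$ is exactly what collapses $\mathcal{C}(\delta)$ to $a$. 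The argument is short once written out, but it is the entire content of the proof, so it should not be left implicit.
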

\begin{proof} For a coloring $\mathcal{C} \in \mathrm{Col}_X(D^o) $, take $a \in X$ such that $ \mathcal{C}(\alpha) =\kappa(a)$. Since $\alpha$ and $\beta$ lie on the same link-component,
there is $g \in \mathrm{Inn}^{\rm even}_{\kappa }(X) $ such that $\mathcal{C}(\beta) = a \cdot g $ from the definition \eqref{popo}.
Then, by the rule of colorings, we have
\[ \mathcal{C}(\gamma) = \mathrm{Tw}^{-1}( \kappa (a \cdot g)) = (a \cdot g ) \lhd \kappa ( a \cdot g) , \ \ \ 
\mathcal{C}(\delta) = \mathrm{Tw}^{-1}( \kappa (a ) ) \lhd ( a \cdot g) = (a \lhd \kappa (a) ) \lhd (a \cdot g). \]
Since $X$ is $f$-link homotopic, $\mathcal{C}(\delta) = a $ and $\mathcal{C}(\gamma) = (a \cdot g) \lhd^{-1}a$.
Thus, we can define another coloring $\mathcal{B}_{f} (\mathcal{C} ) $ by
$ \mathcal{B}_{f} (\mathcal{C} )(\alpha) =a$ and $ \mathcal{B}_{f} (\mathcal{C} )(\beta) =a \cdot g $. Since $\mathcal{C}(\gamma) = (a \cdot g) \lhd^{-1}a = \mathcal{B}_{f} (\mathcal{C} ) (\gamma) $ and $\mathcal{C}(\delta)= a = \mathcal{B}_{f} (\mathcal{C} ) (\delta)$,
the map $\mathcal{B}_{f}: \mathrm{Col}_X(D^o) \ra \mathrm{Col}_X((D')^{o'}) $ is bijective, as required.
\begin{figure}[htpb]
\begin{center}
\begin{picture}(100,64)

\put(-142,48){\Large $D^o$}
\put(22,48){\Large $(D')^{o'}$}

\put(-102,52){\large $\alpha$}
\put(-42,52){\large $\beta $}
\put(-93,-10){\large $\gamma$}
\put(-39,-12){\large $\delta $}
\put(-139,9){\normalsize $(-1)$-framing}
\put(-61,9){\normalsize $(-1)$-framing}
\put(4,23){\large $\longleftrightarrow$}

\put(-146,23){\pcc{kouten5}{0.41304}}

\put(106,52){\large $\beta'$}
\put(67,52){\large $\alpha' $}
\put(106,-5){\large $\delta '$}
\put(43,-5){\large $\gamma'$}

\end{picture}
\end{center}
\caption{\label{kdn2} The diagrams $D, D'$, where all the semi-arc lies in a link component. }
\end{figure}
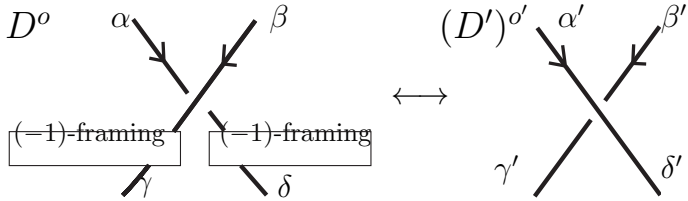
\end{proof}
Since many
3-manifolds can be expressed as the results from $S^3$ of surgery along some framed knots,
in order to get non-trivial colorings, we shall consider skew-racks, which are not $f$-link homotopic.

\section{Examples of skew-racks with property FR from groups}\label{SCG2}
We give examples of skew-racks with property FR.
Throughout this section, we fix a group $G$, an automorphism $\kappa: G \ra
G $ satisfying $\kappa \circ \kappa = \mathrm{id}_G$,
and a map $\delta : G \ra G$ satisfying $\kappa \circ \delta = \delta \circ \kappa$.
Consider the binary operation $\lhd : G \times G\ra G$ defined by $ x \lhd y = \kappa (x) \delta (y).$
Then, the twisting map $\mathrm{Tw}$ in (SS3) is given by $\mathrm{Tw}(g)= g\delta (g)^{-1}. $
\begin{lem}\label{exa3434}
These operations $(\lhd, \kappa)$ define a skew-rack of $X=G$ if and only if
the following holds for any $x,y \in G$:
\begin{equation}\label{key1}
\delta(x) \delta(y)= \delta(y) \delta( x \delta(y)) \in G . \end{equation}
Let $\rho: G \ra G$ be a good involution.
Furthermore, assume that the image $\mathrm{Im}(\delta) \subset G$ is
a subgroup of $G$, and that the cardinality of the preimage $\delta^{-1}(d) $
is constant for any $d \in \mathrm{Im}(\delta)$.
Then, the symmetric skew-rack on $X=G$ has Property FR.

In addition, if the subgroup $ \mathrm{Im}(\delta)$ is commutative, then the skew-rack is $f$-link homotopic.
\end{lem}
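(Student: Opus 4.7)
The plan is to verify the three assertions (characterisation as a skew-rack, Property FR, and $f$-link homotopy) in turn by direct algebraic computation, using \eqref{key1} in the equivalent conjugation-style form $\delta(u\,\delta(w)) = \delta(w)^{-1}\delta(u)\delta(w)$ as the main tool.

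First I would settle the iff for the skew-rack structure. Axiom (SR1) is automatic from $\kappa\circ\delta=\delta\circ\kappa$ and $\kappa^2=\mathrm{id}$, and (SR2) from the bijectivity of $\kappa$ and of right-multiplication by $\delta(b)$ in the group $G$. Expanding both sides of (SR3) using $x\lhd y = \kappa(x)\delta(y)$ and cancelling the common prefix $a$ leaves the identity $\delta(\kappa(b))\delta(c) = \delta(c)\delta(\kappa(b)\delta(c))$, which is exactly \eqref{key1} after the relabelling $x=\kappa(b)$, $y=c$.

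For Property FR I would first prove, by induction on $n$, the closed form
\[
A_{a_1,\dots,a_n}(x) \;=\; \kappa^n(x)\cdot d, \qquad d := \prod_{i=1}^n \delta(\kappa^{n-i}(a_i)) \in \mathrm{Im}(\delta).
\]
Substituting this into the defining equations of $\mathrm{Ann}^{\pm1}$ and cancelling the leading factor, the conditions reduce to $\delta(\kappa^{n+1}(x)) = \kappa(d)^{-1}$ in the $+1$ case and, after one application of \eqref{key1}, to $\delta(\kappa^n(x)) = d\,\kappa(d)\,d^{-1}$ in the $-1$ case; both right-hand sides lie in the $\kappa$-invariant subgroup $\mathrm{Im}(\delta)$. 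Since $\kappa^{n+1}$ is a bijection of $G$ and every non-empty fibre of $\delta$ has the common cardinality $|\delta^{-1}(e)| = |\mathrm{Ann}(X)|$, axiom (FR1) follows. For (FR2), substituting the closed form turns \eqref{pp0} into $\delta(\kappa^n(x))\,d = e$, which follows from the $\mathrm{Ann}^{+1}$-condition after applying $\kappa$. For \eqref{pp03} I would compute $\kappa^{n-j}(c_j) = \kappa^{n-j+1}(a_j)\cdot\delta(\kappa^n(y))$ for $c_j := a_j \lhd \kappa^j(y)$ and apply \eqref{key1} termwise; the factors of $\delta(\kappa^n(y))$ telescope to give $\prod_{j=1}^n \delta(\kappa^{n-j}(c_j)) = \delta(\kappa^n(y))^{-1}\,\kappa(d)\,\delta(\kappa^n(y))$, and the remaining task is to match this with $\delta(\kappa^{n+1}(y))$ using the $\mathrm{Ann}^{-1}$-condition together with $\delta\kappa=\kappa\delta$.

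For the $f$-link homotopy claim, I would observe that each generator in \eqref{popo} acts on $x\in G$ by right multiplication by an element of $\mathrm{Im}(\delta)$ (up to a $\kappa$-twist), so that every $g\in\mathrm{Inn}^{\rm even}_{\kappa}(X)$ sends $x$ to $x\cdot h$ with $h\in\mathrm{Im}(\delta)$. When $\mathrm{Im}(\delta)$ is commutative, a direct application of \eqref{key1} gives $\delta(x\cdot h) = \delta(\kappa(x))$ in the relevant case, so the required equality $x\lhd^{\varepsilon}\kappa(x) = x\lhd^{\varepsilon}(x\cdot g)$ follows. The hard part will be the bookkeeping in \eqref{pp03}: because $G$ is possibly non-commutative and the elements $d$ and $\kappa(d)$ need not commute, correctly aligning the telescoping application of \eqref{key1} with the conjugation identity produced by the $\mathrm{Ann}^{-1}$-condition is the delicate step on which the whole argument pivots.
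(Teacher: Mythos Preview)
Your proposal follows essentially the same route as the paper's proof: direct expansion for the skew-rack axioms, a closed form $A_{a_1,\dots,a_n}(x)=\kappa^n(x)\cdot d$ with $d\in\mathrm{Im}(\delta)$ to reduce the $\mathrm{Ann}^{\pm1}$ conditions to fibre conditions on $\delta$ (yielding (FR1) and \eqref{pp0} immediately), and for the last part the observation that every $g\in\mathrm{Inn}^{\rm even}_{\kappa}(X)$ acts on $G$ by right multiplication by an element of $\mathrm{Im}(\delta)$, so that \eqref{key1} in the commutative case collapses the $\delta$-values. You in fact supply more detail than the paper, which dismisses the $\varepsilon=-1$ case and \eqref{pp03} as ``left to the reader''; one small slip is that \eqref{key1} with commutative $\mathrm{Im}(\delta)$ gives $\delta(x\cdot h)=\delta(x)$ rather than $\delta(\kappa(x))$, which is what the paper's computation actually uses.
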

\begin{proof}Since the former part is shown by direct computation,
we will show only the remaining claims.
We now analyse the set $\mathrm{Ann}^{\varepsilon}(A_{a_1, \dots, a_n})$ in \eqref{pp033}.
First suppose $\varepsilon = +1$ for simplicity.
Then, the condition
$\kappa^{n+1} (x) = A_{a_1, \dots, a_n } (x) \lhd \kappa^{n+1} (x)$
is equivalent to 
\begin{equation}\label{pp13354} \delta(\kappa^n(a_1)) \delta(\kappa^{n-1}(a_2)) \cdots \delta( \kappa(a_n)) \delta(\kappa^{n+1} (x)) =1 . \end{equation}
Since $\mathrm{Im}( \delta ) $ is a subgroup of $G$ by assumption, the set $\mathrm{Ann}^{+1}(A_{a_1, \dots, a_n})$ is non-empty.
Moreover, by the second assumption, the cardinality of
$\mathrm{Ann}^{+1}(A_{a_1, \dots, a_n}) $ does not depend on the choice of $ a_1, \dots, a_n$, that is, $X$ satisfies (FR1).
Concerning (FR2), the equality \eqref{pp0} is shown by
\[ A_{a_1, \dots, a_n} (\kappa^{i-1} (a_i) \lhd x) = \kappa^{n+i}(a_i) \delta(\kappa^{n+i} (x)) \delta(\kappa^{n+i+1}(a_1)) \delta(\kappa^{n+i}(a_2)) \cdots \delta( \kappa^i (a_n)) = \kappa^{n+i+1}(a_i) . \]
On the other hand, it is left to the reader to check \eqref{pp03} in the case $ \varepsilon = -1.$
Hence, $X$ has Property FR, as required.

Finally, we will show the last statement.
From the definition of the subgroup $ \mathrm{Inn}^{\rm even}_{\kappa}(X) $,
any $ g \in \mathrm{Inn}^{\rm even}_{\kappa}(X)$ and $a \in G$ uniquely admit
$b_1, \dots, b_{n} \in \mathrm{Im}( \delta )$ such that $ a \cdot g = a \delta (b_1) \cdots \delta (b_n) \in G $.
Since $ \mathrm{Im}(\delta)$ is commutative, \eqref{key1} means $\delta (a ) =\delta( a \delta (b))$.
Thus,
$$ (z \lhd^{\varepsilon} \kappa(a))\lhd^{- \varepsilon } (a \cdot g) = z \delta (a )^{\varepsilon}   \delta( a \delta (b_1) \cdots \delta (b_n) )^{- \varepsilon}  =z. $$
Therefore, the skew-rack is $f$-link homotopic by Proposition \ref{pm3nthm4}.
\end{proof}
We remark that the equality \eqref{key1} requires some conditions. For example, if $|G|> 1$, the map $\delta$ is not surjective.
In fact, if $ \delta $ is surjective, then \eqref{key1} with $ x=1$ is equivalent to $ z^{-1} \delta (1) z = \delta (z)$ for any $z \in G $, which means
that Im($ \delta )$ is a conjugacy class, and contradicts the surjectivity.
However, we will give some examples satisfying the conditions in Lemma \ref{exa3434}.
\begin{exa}\label{exa7}
First, we observe the case where $\delta$ is a group homomorphism. Then, we can easily check that the equality \eqref{key1} is equivalent to
that $\delta \circ \delta =0$ and the image $ \mathrm{Im}(\delta) $ is abelian.
If so, the cardinality of $\delta^{-1}(k)$ is constant; thus, if $X$ admits a good involution, then the symmetric skew-rack has Property FR, and is $f$-link homotopic by Lemma \ref{exa3434}.
\end{exa}
To avoid $f$-link homotopic skew-racks, we should focus on $\delta$ which is not a homomorphism.
\begin{exa}[Twisted conjugacy classes]\label{exa18}
Suppose a group automorphism $f : G \ra G$, and define $ \delta (x)= f (x^{-1}) x $.
Then, the equality \eqref{key1} is equivalent to $ \mathrm{Im}( \delta \circ \delta) = \{ 1_G \}$.
In general, it can be easily checked that, for any $g \in \mathrm{Im}( \delta) $, the preimage $\delta^{-1}(g)$ is bijective to
the fixed-point subgroup $\{ h\in G \mid f ( h)=h\}$; see, e.g., \cite{BNN}.
Thus, to apply Lemma \ref{exa3434}, the remaining point is to analyze the situation such that the image $ \mathrm{Im}( \delta)$ is a subgroup.

The image $\mathrm{Im}( \delta) $ is sometimes called {\it the twisted conjugacy classes} or {\it Reidemeister conjugacy classes}.
The papers \cite{BNN,GN} investigate some conditions to require that $ \mathrm{Im}( \delta)$ is a subgroup and $ \mathrm{Im}( \delta \circ \delta) = \{ 1_G \}$.
However, many examples in the papers satisfy that $ \mathrm{Im}(\delta)$ is commutative
Thus, it seems hard to find examples of pairs $(G,f)$ satisfying that the resulting skew-racks are not $f$-link homotopic.
\end{exa}
\begin{exa}\label{exa28}
Take a group $K$ with a normal subgroup $N \unlhd K$, and an involutive automorphism $f: K \ra K$ satisfying $f(N) \subset N$.
Let $G$ be $K \times N$ and $\kappa$ be $ f \times f$.
Define $\delta (x ,y) $ to be $ ( x^{-1} yx,1)$, where $x,y \in K$.
Next, we will check the conditions in Lemma \ref{exa3434}. The check of \eqref{key1} is obvious.
Since $N= \{ b^{-1} ab \mid a\in N ,b \in K\}$, the image of $\delta $ is $N \times 1$ as a subgroup of $G$.
Moreover, for any $(k,1) \in K \times 1$, the preimage $\delta^{-1}(k,1)$ is equal to $ \{ ( y^{-1} k y, y) \in G \mid y \in K \} $ which is bijective to $K$. In conclusion,
the symmetric skew-rack on $G$ has Property FR, by Lemma \ref{exa3434}.
For example, if $N=K$, the skew-rack on $G$ is equal to that in Example \ref{ss2} exactly.
\end{exa}


Finally, we will compute some colorings using the above skew-racks with Property FR.
\begin{exa}\label{exa3332}
For natural numbers $n, m \in \mathbb{N}$,
we first compute colorings of the lens space $L(nm -1, n)$.
Here, let $X=G$ be a skew-rack with good involution
and satisfy the conditions in Lemma \ref{exa3434}.
Let $D$ be the Hopf link with framing $(n,m)$. Then, $M_D$ is knwon to be $L(nm -1 ,n)$.
We fix two semi-arcs $\alpha,\beta$ in each link-component on $D$.
Then, from the definition of colorings, a coloring $\mathcal{C} \in \mathrm{Col}_X(D)$ satisfies 
\begin{equation}\label{pp7} \kappa ( \mathcal{C} (\alpha) \lhd \mathcal{C} (\beta) )= \mathrm{Tw}^n( \mathcal{C} (\alpha)) , \ \ \ \ \kappa ( \mathcal{C} (\beta) \lhd \mathcal{C} (\alpha) )= \mathrm{Tw}^m( \mathcal{C} (\beta)) .\end{equation}
Conversely, every $ a, b \in X$ satisfying $\kappa ( a\lhd b)= \mathrm{Tw}^n( a)$ and $ \kappa ( b\lhd a)= \mathrm{Tw}^m(b) $ yield a coloring of $D$. Since $\mathrm{Tw}^n(a)=a \delta(a)^{-n}$, notice that \eqref{pp7} is equivalent to the condition $\mathcal{C} (\beta) = \delta( \mathcal{C} (\alpha))^m $ and $\delta( \mathcal{C} (\alpha))^{nm-1} =1 $. Hence, $\mathrm{Col}_X(D)$ is bijective to
\begin{equation}\label{pp777} \{ (a,b) \in G^2 \mid \delta( a)^{nm-1} =1, \delta( b)= \delta( a)^{n} \} \stackrel{1:1}{\longleftrightarrow} \{ a \in G \mid \delta( a)^{nm-1} =1\} \times \delta^{-1}( 0) .\end{equation}
In particular, the set $\mathrm{Col}_X(D)$ depends only on $nm$, and thus, cannot classify the lens spaces of the forms $L(nm -1,n) $.
In contrast, we later compute some cocycle invariants, which can distinguish among some lens spaces (see Example \ref{exa18}). 
\end{exa}
\begin{exa}\label{exa3355}
Next, we will observe that the sets of colorings of integral homology 3-spheres seem strong invariants,
where we consider the skew-rack on $X= K \times K$ in Example \ref{ss2}.
Let $D_n^{\pm }$ be the $(2,n)$-torus knot with framing $\pm 1$. Then, the resulting 3-manifold $M_{D_n^{\pm }}$ is
the Brieskorn 3-manifold of the form $\Sigma(2,n, 2n \mp 1 )$, as an integral homology 3-sphere.
Then, for a concrete group $K$, it is not so hard to determine the set $\mathrm{Col}_X(D_q^{\pm }) $ with the help of the computer program. 
For example, we give a list of some computation of $|\mathrm{Col}_X(D_q^{\pm }) | $; see Table \ref{444}.
\end{exa}
\begin{rem}\label{exa33355} In this example, we focus on non-abelian groups $K$. In fact,
if $K$ is abelian, $(x,a)\lhd (y,b)=(x,a)$; hence, the coloring conditions are trivial. Thus,
considering the linking matrix of $D$, we can easily find a 1:1-correspondence $ \mathrm{Col}_X(D) \simeq \Hom(H_1(M;\Z), K) \times K^{\sharp D}$.
\end{rem}

\begin {table}
\begin {tabular}{l | cccccccccccc}
$ p $ & $|\mathrm{Col}_X(D_3^{+} )|$ & & $|\mathrm{Col}_X(D_3^{-} )|$ & & $|\mathrm{Col}_X(D_5^{+} )|$ & & $|\mathrm{Col}_X(D_5^{-} )|$ & & $|\mathrm{Col}_X(D_7^{+} )|$ & & $|\mathrm{Col}_X(D_7^{-} )|$\\ \hline
3 & $ |K|$ & & $ |K|$ & & $ |K|$ & & $|K|$ & & $ |K|$ & & $ |K|$ \\ \hline
5 & $ 121|K|$ & & $ |K|$ & & $ 121|K|$ & & $ |K|$ & & $ |K|$ & & $ |K|$ \\ \hline
7 & $ |K|$ & & $ 337|K|$ & & $ |K|$ & & $ |K|$ & & $ |K|$ & & $ 57|K|$ \\ \hline
11 & $2641 |K|$ & & $ |K|$ & & $ 2641|K|$ & & $ 2641|K|$ & & $ |K|$ & & $ |K|$ \\ \hline
13 & $|K|$ & & $ 6553|K|$ & & $ |K|$ & & $ |K|$ & & $ |K|$ & & $ |K|$ \\ \hline
\end {tabular}
\caption{The cardinality of $\mathrm{Col}_X(D_n^{\pm 1 }) $ for some $p,n$. Here $|K|=|\mathrm{SL}_2(\F_p)|=p(p^2-1).$}
\label{444}
\end {table}

\section{Cocycle invariants of 3-manifolds}\label{SCG211}
As seen in \cite{CEGS, CEGN, Nosbook}, there are some procedures to concretely find symmetric birack 2-cocycles. However, the conditions of 2-cocycles to require the invariance with respect to Fenn-Rourke moves seem strong.
However, in this section, we discuss 2-cocycle invariants to obtain 3-manifold invariants.
Throughout this section, we suppose a symmetric skew-rack $X$ with Property FR, and
a map $\phi$ from $X^2$ to a commutative ring $A $.

We first introduce Property FR of birack 2-cocycles as follows:
\begin{defn}\label{def8}
Let us recall the bijection $\mathcal{B}$ in Theorem \ref{m3nthm4}, and denote by $0_A$ the constant map to $A$ whose image is zero.
A symmetric birack 2-cocycle $\phi:X^2 \ra A$ satisfies {\it Property FR}, if $\Phi_D = (\Phi_{D '} \times 0_A)\circ \mathcal{B}$ holds for any diagrams $D$ and $D'$ in Figure \ref{kdn66}. Here, $\Phi_C$ is the cocycle invariant explained in \S \ref{gg2}. 

Furthermore, $\phi$ is said to be {\it $f$-link homotopic} if $X$ is $f$-link homotopic and the following holds for any
$a \in X$ and $g \in \mathrm{Inn}^{\rm even}_{\kappa}(X)$:
\begin{equation}\label{kkk4} \phi( ( a \cdot g) \lhd \kappa(a), a \cdot g ) + \phi( a, a\lhd \kappa(a)  ) = \phi(\kappa (a) , a \cdot g ) + \phi ( ( a \cdot g) \lhd \kappa(a), a  ) .\end{equation}
\end{defn}
We will see (Proposition \ref{prop8}) that symmetric birack 2-cocycles with Property FR yield topological invariants of closed 3-manifolds.
Take two maps $ F: Y \ra A$ and $ G: Z \ra A $, where $Y$ and $Z$ are some sets.
We call $F$ {\it an FR-stabilization} of $G$, if there is a bijection $B : Z \rightarrow Y \times \mathrm{Ann}(X) $ such that $g \circ B^{-1}= f\times 0_A $. More generally, $F $ and $G$ are {\it FR-equivalent}, if $F$ and $G$ are related by a finite sequence of FR-(dis-)stabilizations.
Then, the following proposition is almost obvious, by definitions.
\begin{prop}\label{prop8}
Let $\phi$ be a symmetric birack 2-cocycle with Property FR. Then,
the correspondence $D \mapsto \Phi_D$ up to FR-equivalent relations is an invariant of closed 3-manifolds.

Moreover, if $X$ and $\phi$ are $f$-link homotopic, and if $D$ and $D'$ are related by the operation in Figure \ref{kdn2}, then $\Phi_D=\Phi_{D'} \circ B_f$, where $B_f$ is the bijection $\mathrm{Col}_X(D) \ra \mathrm{Col}_X(D' ) $ in the proof of Proposition \ref{pm3nthm4}.
\end{prop}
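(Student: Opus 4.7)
The plan is to package three ingredients already in place. First, the Kirby/Fenn-Rourke theorem says two framed link diagrams yield orientation-preserving homeomorphic surgeries if and only if they are connected by a finite sequence of Reidemeister moves of type II, type III, and doubled type I, together with Fenn-Rourke moves. Second, Section \ref{gg2} shows that, for any symmetric birack 2-cocycle $\phi$, the map $\Phi_D$ is invariant (up to the canonical bijection $\mathcal{B}_{D^o,(D')^{o'}}$) under the three Reidemeister-type moves and, by the symmetry hypothesis on $\phi$, is independent of the chosen orientation. Third, Property FR of $\phi$ is by Definition \ref{def8} precisely the statement that $\Phi_D = (\Phi_{D'}\times 0_A)\circ \mathcal{B}$ whenever $D$ and $D'$ differ by a Fenn-Rourke move, which is the definition of an FR-stabilization. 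Concatenating the generating moves produces a finite sequence of Reidemeister bijections and FR-(de-)stabilizations, so the FR-equivalence class of $\Phi_D$ descends to an invariant of closed 3-manifolds.

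For the second claim, I would argue by a local calculation on Figure \ref{kdn2}. Since $D$ and $D'$ agree outside a disk and $B_f$ preserves the colors on every semi-arc outside that disk, the weights of all non-local crossings contribute identically to $\Phi_D(\mathcal{C})$ and $\Phi_{D'}(B_f(\mathcal{C}))$. The remaining task is to check that the sum of the weights of the crossings inside the disk matches. From the proof of Proposition \ref{pm3nthm4}, the $f$-link homotopic hypothesis on $X$ forces the local colors on $D$ to be
\[\mathcal{C}(\alpha) = \kappa(a),\qquad \mathcal{C}(\beta) = a\cdot g,\qquad \mathcal{C}(\delta) = a,\qquad \mathcal{C}(\gamma) = (a\cdot g)\lhd^{-1} a,\]
and the colors on $D'$ satisfy $B_f(\mathcal{C})(\alpha') = a$, $B_f(\mathcal{C})(\beta') = a\cdot g$, while $\gamma',\delta'$ inherit the same colors as $\gamma,\delta$. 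Substituting these into the weight $\varepsilon_\tau \phi(\mathcal{C}(\alpha_\tau), \mathcal{C}(\beta_\tau))$ at each of the two local crossings of $D$ and of $D'$ and collecting terms, the identity to verify collapses exactly to \eqref{kkk4}, which is the defining condition for $\phi$ to be $f$-link homotopic; this yields $\Phi_D = \Phi_{D'}\circ B_f$.

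The main obstacle is the bookkeeping inside the disk: for each of the four local crossings one must correctly identify which semi-arc plays the role of $\alpha_\tau$ and which plays the role of $\beta_\tau$, and read off the sign $\varepsilon_\tau$, so that the four weights line up exactly with the two sides of \eqref{kkk4}. Once the combinatorics of Figure \ref{kdn2} is pinned down, the verification is a direct substitution; any mislabeling of an over/under strand or of a crossing sign would break the cancellation. I would execute this by redrawing both local pictures with every semi-arc labeled by its color and then expanding $\Phi_D(\mathcal{C}) - \Phi_{D'}(B_f(\mathcal{C}))$ term by term to match \eqref{kkk4}.
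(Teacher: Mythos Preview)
Your argument is correct and is precisely the unpacking the paper declines to write out: the paper offers no proof beyond the remark that the proposition is ``almost obvious, by definitions.'' Your first paragraph assembles exactly the three ingredients the paper sets up (Reidemeister invariance of $\Phi_D$ from Section~\ref{gg2}, the Fenn--Rourke theorem, and Definition~\ref{def8} as an FR-stabilization), and your second paragraph correctly localizes the computation to the disk in Figure~\ref{kdn2} and identifies the residual identity with \eqref{kkk4}; the bookkeeping caveat you flag is the only genuine work, and it is routine once the local labels are fixed.
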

To conclude, to obtain 3-manifold invariants, it is important to find symmetric birack 2-cocycles with concrete expressions.
For this, let us discuss Lemmas \ref{thm3338} and \ref{thm33389} below.
Let $\X $ be $ X \times A$. 
Define $ \tilde{\lhd} :\X \times \X \ra \X$ by
$$ (x, a)\tilde{\lhd} (y,b)= ( x \lhd y, a + \phi(x,y) ), \ \ \ \ \ \ \ \ (x,y \in X, a,b \in A ), $$
and $\tilde{\kappa} : \X \ra \X$ by $ \tilde{\kappa}(x,a) = (\kappa (x), -a)$. 
\begin{lem}[{cf. \cite[Section 3]{CEGS}}]\label{thm3338}
These maps $\tilde{\lhd},\tilde{\kappa}$ define a skew-rack on $\X= X \times A $,
if and only if $\phi$ is a birack 2-cocycle.
\end{lem}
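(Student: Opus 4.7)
The plan is to verify the three skew-rack axioms (SR1)--(SR3) for $\X$ by direct computation, expanding each defining equation coordinate-by-coordinate and using that the first coordinate lies in $X$ and the second in the abelian group $A$. In every case I expect the first-coordinate identity to reduce to the corresponding axiom of $(X,\lhd,\kappa)$, while the second-coordinate identity produces exactly one of the two clauses of \eqref{condition}. For the converse direction it will suffice to specialise $a=b=c=0\in A$ in each second-coordinate identity, isolating the cocycle conditions on $\phi$ from the rack axioms for $X$.

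For (SR2), fixing $(y,b)\in\X$, the right multiplication $(x,a)\mapsto (x\lhd y,\,a+\phi(x,y))$ has inverse $(x',a')\mapsto \bigl(x'\lhd^{-1}y,\,a'-\phi(x'\lhd^{-1}y,\,y)\bigr)$; existence of the first component uses (SR2) on $X$, and the second is well-defined because $A$ is a group. For (SR3), I would expand both sides of $\bigl((x,a)\tilde\lhd (y,b)\bigr)\tilde\lhd (z,c) = \bigl((x,a)\tilde\lhd \tilde\kappa(z,c)\bigr)\tilde\lhd \bigl((y,b)\tilde\lhd (z,c)\bigr)$. The $c$-entry of $\tilde\kappa(z,c)$ never feeds into the $A$-coordinate, so after cancelling the common $a$-term the second coordinate yields exactly $\phi(x,y)+\phi(x\lhd y,z) = \phi(x,\kappa(z))+\phi(x\lhd \kappa(z),\,y\lhd z)$, which is the first clause of \eqref{condition}; the first-coordinate identity is (SR3) on $X$.

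For (SR1), the same method applied to $\tilde\kappa\bigl((x,a)\tilde\lhd (y,b)\bigr)=\tilde\kappa(x,a)\tilde\lhd \tilde\kappa(y,b)$ gives (SR1) on $X$ on the first coordinate and, on the second, the $\kappa$-invariance clause $\phi(x,y)=\phi(\kappa(x),\kappa(y))$ of \eqref{condition} (taking into account the sign introduced by $\tilde\kappa(x,a)=(\kappa(x),-a)$). I do not expect any conceptual obstacle: the argument is essentially bookkeeping. The one step requiring genuine care is the consistent tracking of the $(-a)$ sign in $\tilde\kappa$ across (SR1), so that this sign interacts correctly with the $\kappa$-invariance clause of \eqref{condition} and the two axioms line up on the nose; once this is done, the ``only if'' direction falls out immediately by specialising all $A$-coordinates to zero.
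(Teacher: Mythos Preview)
Your approach---a coordinate-by-coordinate direct computation---is exactly what the paper has in mind; it says only ``This can be immediately shown by direct computation.'' Your analyses of (SR2) and (SR3) are fine: (SR2) needs no condition on $\phi$, and (SR3) in the second coordinate produces precisely the first clause of \eqref{condition}.

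The step that does not go through as you expect is (SR1). Carry out the computation with the paper's definition $\tilde\kappa(x,a)=(\kappa(x),-a)$:
\[
\tilde\kappa\bigl((x,a)\tilde\lhd(y,b)\bigr)=(\kappa(x\lhd y),\,-a-\phi(x,y)),\qquad
\tilde\kappa(x,a)\tilde\lhd\tilde\kappa(y,b)=(\kappa(x)\lhd\kappa(y),\,-a+\phi(\kappa(x),\kappa(y))).
\]
Equating second coordinates gives $\phi(\kappa(x),\kappa(y))=-\phi(x,y)$, \emph{not} the $\kappa$-invariance clause $\phi(b,c)=\phi(\kappa(b),\kappa(c))$ of \eqref{condition}. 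So your sentence ``the two axioms line up on the nose'' is not correct as written: with the minus sign in $\tilde\kappa$, (SR1) for $\X$ is equivalent to the anti-invariance condition rather than the invariance condition appearing in \eqref{condition}. You correctly flagged this as the one place requiring care; when you do the bookkeeping you will see the sign does not cancel. Either the minus in $\tilde\kappa$ or the sign in the second clause of \eqref{condition} is off; you should record this discrepancy rather than assert that the computation closes up.
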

This can be immediately shown by direct computation. Furthermore, we now show 
\begin{lem}\label{thm33389}
Suppose a birack 2-cocycle $\phi$ satisfying $\phi(a,b)=-\phi(\rho(a) ,\kappa(b))$.
Then, the map $\overline{\phi}:X^2 \ra A$ that sends $(a,b)$ to $\phi(a,b) -\phi(a\lhd b, \rho(b) ) $ is a symmetric birack 2-cocycle.
\end{lem}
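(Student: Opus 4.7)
The plan is to verify both claims in the statement: that $\overline{\phi}$ satisfies the symmetry equalities $\overline{\phi}(a,b)=-\overline{\phi}(a\lhd b,\rho(b))=-\overline{\phi}(\rho(a),\kappa(b))$ and that it satisfies the two birack 2-cocycle equations of \eqref{condition}. The symmetry part goes quickly. For the first equality, I expand
\[
-\overline{\phi}(a\lhd b,\rho(b))=-\phi(a\lhd b,\rho(b))+\phi\bigl((a\lhd b)\lhd\rho(b),\,\rho\rho(b)\bigr),
\]
and apply the undo relation $(a\lhd b)\lhd\rho(b)=a$ from (SS1) together with $\rho\circ\rho=\id_X$ from (SS2); this collapses to $\phi(a,b)-\phi(a\lhd b,\rho(b))=\overline{\phi}(a,b)$. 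For the second equality, I expand $-\overline{\phi}(\rho(a),\kappa(b))$, then rewrite $\rho(a)\lhd\kappa(b)=\rho(a\lhd b)$ by (SS1) and $\rho\kappa(b)=\kappa\rho(b)$ by (SS2), and finally apply the hypothesis $\phi(x,y)=-\phi(\rho(x),\kappa(y))$ to each of the two summands to recover $\overline{\phi}(a,b)$.

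For the 2-cocycle identities, the second one, $\overline{\phi}(b,c)=\overline{\phi}(\kappa(b),\kappa(c))$, is immediate from the analogous identity for $\phi$ together with (SR1) and the commutation $\rho\kappa=\kappa\rho$ from (SS2). The main work is the first cocycle identity, and my approach is to split $\overline{\phi}=\phi-\psi$ where $\psi(a,b):=\phi(a\lhd b,\rho(b))$. Since $\phi$ already satisfies the cocycle identity by assumption, and the condition is linear in the cochain, the identity for $\overline{\phi}$ reduces to the same identity for $\psi$. Substituting the definition of $\psi$ and using (SR3) in the form $(a\lhd\kappa(c))\lhd(b\lhd c)=(a\lhd b)\lhd c$, this reduces to proving
\[
\phi(a\lhd b,\rho(b))+\phi\bigl((a\lhd b)\lhd c,\,\rho(c)\bigr)=\phi\bigl(a\lhd\kappa(c),\,\rho\kappa(c)\bigr)+\phi\bigl((a\lhd b)\lhd c,\,\rho(b\lhd c)\bigr).
\]

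Verifying the displayed identity will be the main obstacle. My strategy is to apply the cocycle identity for $\phi$ twice, each time choosing the $Z$-slot to be a $\rho$-image so that (SS1)'s undo relation $(x\lhd y)\lhd\rho(y)=x$ collapses one of the four resulting terms to a simpler first argument, and so that (SR3) can be used to match the other term against an expression of the form $(a\lhd b)\lhd c$. The hypothesis $\phi(x,y)=-\phi(\rho(x),\kappa(y))$ is then applied once or twice to flip a residual $\rho$--$\kappa$ pair so that the two sides align. The calculation is of the same flavor as the symmetric--quandle computation in the proof of \cite[Theorem 6.3]{KO}, extended to the skew-rack setting by systematically keeping track of the extra $\kappa$ introduced via (SR1), (SR3), and (SS1). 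Once this identity is in hand, the lemma follows at once.
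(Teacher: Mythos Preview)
Your reduction is exactly the paper's: after disposing of the symmetry equalities and the $\kappa$-invariance, you split $\overline{\phi}=\phi-\psi$ with $\psi(a,b)=\phi(a\lhd b,\rho(b))$ and arrive at precisely the identity the paper labels \eqref{hhh},
\[
\phi(a\lhd b,\rho(b))+\phi\bigl((a\lhd b)\lhd c,\rho(c)\bigr)=\phi\bigl(a\lhd\kappa(c),\rho\kappa(c)\bigr)+\phi\bigl((a\lhd b)\lhd c,\rho(b\lhd c)\bigr).
\]
The difference is in how this identity is established. You propose to apply the cocycle relation for $\phi$ twice and then use the hypothesis $\phi(x,y)=-\phi(\rho(x),\kappa(y))$ to realign; you outline the flavor of the manipulation but do not actually carry it out. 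The paper instead finishes in one stroke: it makes the substitution $a\mapsto (a\lhd b)\lhd c$, $b\mapsto \rho(b\lhd c)$, $c\mapsto \rho\kappa(c)$ directly into the cocycle equation \eqref{condition} for $\phi$. Using (SR3) in the form $(a\lhd b)\lhd c=(a\lhd\kappa(c))\lhd(b\lhd c)$, the undo relation from (SS1), and $\kappa^2=\id$, $\rho\kappa=\kappa\rho$ from (SS2), the four substituted terms become exactly the four terms of the displayed identity. So only a single instance of \eqref{condition} is needed, and the hypothesis $\phi(a,b)=-\phi(\rho(a),\kappa(b))$ is not used at this step at all. Your sketch would presumably close, but the paper's change of variables is both shorter and shows that the first cocycle equality for $\overline{\phi}$ is a direct consequence of that for $\phi$ alone.
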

\begin{proof} It is easy to check $\overline{\phi}(a,b)+ \overline{\phi}(a \lhd b, \rho(b))=0$.
Thus, it remains to check the cocycle condition \eqref{condition} of $\overline{\phi} $. For this, we may show
\begin{equation}\label{hhh} \phi (a\lhd b, \rho(b)) +\phi( (a\lhd b) \lhd c,\rho(c))= \phi( a \lhd \kappa (c) , \rho(\kappa(c)))+ \phi((a\lhd b)\lhd c, \rho(b\lhd c)). \end{equation}
Replace $ (a\lhd b) \lhd c , \rho (b \lhd c) $ and $, \rho (\kappa (c))$ by $a,b$, and $c$ respectively.
Then, we can easily check that the replacement of \eqref{hhh} coincides with \eqref{condition}, which completes the proof.
\end{proof}

Using these lemmas, we will give some examples from skew-racks in Example \ref{exa28}. Let $N \unlhd  K$ be groups, and $f: K \ra K$ be an involutive automorphism satisfying $f(N) \subset N$.
Furthermore, take a normalized group 2-cocycle $\theta : K \times K \ra A$,
that is, $\theta$ satisfies
\[\theta(x,y)-\theta(x,yz)+\theta(xy,z)-\theta(y,z)=0,\ \ \ \ \theta(1_K,x)= \theta(x,1_K ) =0 \in A,\]
for any $x,y,z\in K$.
Then, the product $\tilde{K} = K \times A$ has a group structure with operation $( (x,a), (y,b)) \mapsto (xy, a+b + \theta(x,y))$, as a central extension of $K$.
As is known in group cohomology, every central extension over $K$ with fiber $A$ can be expressed by the product for some $\theta.$
Then, from Example \ref{exa28}, we can define the symmetric skew-racks on $G=K\times N$ and $\tilde{G}= \tilde{K} \times \tilde{N}$,
which have Property FR.
Moreover, by the definition of $\lhd$ on $\tilde{G}$, we notice
\[ \bigl((x,a),(y,b) \bigr) \lhd \bigl((z,c),(w,d) \bigr)= \bigl( \tilde{\kappa}(x,a)(z^{-1}, -c-\theta(z,z^{-1}) )(w,d)(z,c), \ (y,b) \bigr)\]
\[=\bigl((f(x) z^{-1} wz , f(a) +d+ \theta ( f(x), z^{-1})+ \theta ( f(x) z^{-1}, w z)+\theta(w,z) -\theta(z,z^{-1}) ), (y,b) \bigr) \in \tilde{G}. \]
Inspired by Lemma \ref{thm3338}, we obtain a procedure of producing birack 2-cocycles as follows: 
\begin{thm}\label{lhm8}
Let $ \lambda : N \ra A$ be a group 1-cocycle. 
Then, the map 
\[\phi_{\lambda, \theta} : G^2=(K \times N) \times (K \times N) \ra A \]
\[ (x,y,z,w) \longmapsto \lambda(y) \bigl( \theta ( f(x), z^{-1})+ \theta ( f(x) z^{-1}, w z)+\theta(z,w) -\theta(z,z^{-1}) \bigr) \]
is a birack 2-cocycle of the skew-rack $G= K \times N$ in Example \ref{ss2}. If 
\[ \lambda(x) \theta(a,b )= \lambda(f(x)) \theta(f(a),f(b) ),\]
hold for any $a,b\in K,x \in N,$ then the condition in Lemma \ref{thm33389} is true. 
In particular, the cocycle $\overline{\phi_{\lambda, \theta} }$ mentioned in Lemma \ref{thm33389} is a symmetric birack 2-cocycle.
\end{thm}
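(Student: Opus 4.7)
The plan is to verify the birack 2-cocycle axioms \eqref{condition} for $\phi_{\lambda, \theta}$ directly, and then to check the antisymmetry condition $\phi(a,b) = -\phi(\rho(a), \kappa(b))$ that feeds into Lemma \ref{thm33389}. Writing $a = (x_a, y_a)$, $b = (x_b, y_b) \in G = K \times N$ and
\[ \Theta(x,z,w) := \theta(f(x), z^{-1}) + \theta(f(x) z^{-1}, wz) + \theta(z, w) - \theta(z, z^{-1}), \]
one has $\phi_{\lambda, \theta}(a, b) = \lambda(y_a) \Theta(x_a, x_b, y_b)$. The key preliminary observation is that $\Theta(x,z,w)$ coincides with the $A$-component of the product $(f(x), 0)(z, 0)^{-1}(w, 0)(z, 0)$ computed inside the central extension $\tilde K = K \times_\theta A$; this identification follows from one use of the 2-cocycle property of $\theta$ and presents $\Theta$ as the obstruction to the set-theoretic section $x \mapsto (x, 0)$ lifting the group expression $f(x) z^{-1} w z$ multiplicatively.

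For the first equation of \eqref{condition}, I would expand $\phi(a, b) + \phi(a \lhd b, c) - \phi(a, \kappa(c)) - \phi(a \lhd \kappa(c), b \lhd c)$ and separate by the $\lambda$-prefactor. Since the $N$-component of any $\lhd$-product equals $f$ applied to the $N$-component of the first argument, the difference naturally splits into a $\lambda(y_a)$-coefficient and a $\lambda(f(y_a))$-coefficient. Each coefficient must vanish, and each vanishing is the $A$-component of a skew-rack-axiom-(SR3)-type identity lifted into $\tilde K$. Concretely, each identity reduces to the 2-cocycle relation on $\theta$ evaluated at a handful of well-chosen triples; this encodes associativity in $\tilde K$ applied to the group identity underlying (SR3) for $\lhd = \kappa(\cdot)\delta(\cdot)$ on $G$.

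For the second equation of \eqref{condition} and for the antisymmetry hypothesis of Lemma \ref{thm33389}, the compatibility assumption $\lambda(x) \theta(a,b) = \lambda(f(x)) \theta(f(a), f(b))$ is applied term-by-term. For $\kappa$-invariance, each $\theta$-summand in $\phi(\kappa(b), \kappa(c))$ picks up $f$ on all arguments and a $\lambda(f(y_b))$-prefactor; the assumption identifies it with the matching summand in $\phi(b, c)$. For antisymmetry, I would compute $\phi(\rho(a), \kappa(b))$ using $\rho(x, y) = (f(x), f(y)^{-1})$; the sign flip $\lambda(f(y_a)^{-1}) = -\lambda(f(y_a))$, coming from the 1-cocycle property of $\lambda$ together with normalization, combined with the compatibility assumption yields the required negation of $\phi(a, b)$. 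Lemma \ref{thm33389} then produces $\overline{\phi_{\lambda, \theta}}$ as a symmetric birack 2-cocycle.

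I expect the main obstacle to be the first equation of \eqref{condition}: its full expansion generates many $\theta$-summands whose cancellation is not visually apparent, and matching them requires invoking the 2-cocycle identity for $\theta$ at several non-obvious triples. The reinterpretation of $\Theta$ as an $A$-component of a product in $\tilde K$ is the organizing device that turns this verification into a bookkeeping translation of associativity in $\tilde K$, rather than a blind manipulation of $\theta$-terms.
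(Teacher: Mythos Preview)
Your overall strategy---reading $\Theta(x,z,w)$ as the $A$-component of $s(f(x))\,s(z)^{-1}s(w)\,s(z)$ in the central extension $\tilde K$ and reducing the verification to associativity there---is exactly the mechanism the paper relies on. The paper gives no explicit proof: it computes $\lhd$ on $\tilde G=\tilde K\times\tilde N$, observes that this is a skew-rack by Example~\ref{exa28}, and cites Lemma~\ref{thm3338} as the template. Your handling of the $\kappa$-invariance $\phi(b,c)=\phi(\kappa b,\kappa c)$ and of the antisymmetry hypothesis for Lemma~\ref{thm33389}, both via the compatibility assumption applied term by term, is correct.

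There is, however, a genuine gap in your plan for the first equation in \eqref{condition}. After grouping by prefactor, the $\lambda(y_a)$-coefficient is $\Theta(x_a,x_b,y_b)-\Theta(x_a,f(x_c),f(y_c))$ and the $\lambda(f(y_a))$-coefficient is $\Theta(x_{a\lhd b},x_c,y_c)-\Theta(x_{a\lhd\kappa(c)},x_{b\lhd c},y_{b\lhd c})$. Neither of these vanishes on its own: the first depends on $b$ and $c$ through unrelated variables, so no (SR3)-type identity in $\tilde K$ can kill it separately. What associativity in $\tilde K$ actually produces is a \emph{single} relation among the four $\Theta$-terms (essentially $\tilde P\,\tilde Q=\tilde Q\cdot(\tilde Q^{-1}\tilde P\,\tilde Q)$, combined with the fact that the defect between $s(uv)$ and $s(u)s(v)$ is central and so drops out of conjugation), not two independent ones. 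To pass from that single relation to the $\lambda$-weighted cocycle equation you need an extra step: use the compatibility $\lambda(y)\theta(a,b)=\lambda(f(y))\theta(f(a),f(b))$ to rewrite the two $\lambda(y_a)$-terms as $\lambda(f(y_a))$-terms (each such rewriting replaces $\Theta(x,z,w)$ by $\Theta(f(x),f(z),f(w))$), after which all four summands carry the common prefactor $\lambda(f(y_a))$ and the residual identity is the single associativity statement above. In other words, the compatibility hypothesis is already doing work in the first equation of \eqref{condition}, not only in the $\kappa$-invariance and in the antisymmetry check; your ``split into two independently vanishing pieces'' will not go through as written.
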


In general, it seems hard to find group 2-cocycles $\theta$ such that the associated map $\phi_{\lambda, \theta} $
has Property FR.
However, when $K$ is a cyclic group,
we will give such examples of birack cocycles with Property FR. 
More precisely, by a direction computation, we can easily show that
\begin{prop}\label{prop28}
Let $p \in \Z$ be an odd prime.
Let $K=N= \Z/p$, and take $\varepsilon \in \{ \pm 1\} $ such that $f (x)= \varepsilon x, $
Let us define group cocycles $\lambda$ and $\theta $ by setting
\[ \lambda(x)=x, \ \ \ \ \theta (x,y) = \frac{ (x+ \varepsilon y)^p - x^p-(\varepsilon y)^p}{p} = \sum_{j: 1 \leq j <p } j^{-1} x^j (\varepsilon y)^{p-j},\]
respectively, where $x,y \in \Z/p$.
Then, $\overline{\phi_{\lambda, \theta} }(x,y,z,w)= 2y \theta (x,w) $, and the symmetric birack 2-cocycle $\overline{\phi_{\lambda, \theta} } $ has Property FR, and is $f$-link homotopic.
\end{prop}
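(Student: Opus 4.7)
The plan is a sequence of direct verifications, with the Fenn-Rourke invariance check as the main obstacle.

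First, I would confirm that $\theta$ is a normalized group 2-cocycle on $K = \Z/p$ with values in $A = \Z/p$. The formula $\theta(x,y) = ((x+\varepsilon y)^p - x^p - (\varepsilon y)^p)/p$ (taken with integer lifts and reduced modulo $p$) is the classical Witt cocycle, twisted by $\varepsilon$, representing an extension $\Z/p^2 \twoheadrightarrow \Z/p$; its cocycle identity follows by lifting to $\Z$ and expanding modulo $p^2$. Next, the hypothesis of Theorem \ref{lhm8} and Lemma \ref{thm33389}, namely $\lambda(x)\theta(a,b)=\lambda(f(x))\theta(f(a),f(b))$, reduces to $\theta(a,b) = \varepsilon\,\theta(\varepsilon a,\varepsilon b)$. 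This is immediate for $\varepsilon = +1$; for $\varepsilon = -1$ it follows from $(-1)^{p-j} = -(-1)^{j}$ (which uses that $p$ is odd). Hence $\overline{\phi_{\lambda,\theta}}$ is a symmetric birack 2-cocycle.

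Next, I would simplify $\overline{\phi_{\lambda,\theta}}(x,y,z,w) = \phi_{\lambda,\theta}(x,y,z,w) - \phi_{\lambda,\theta}((x,y)\lhd(z,w),\rho(z,w))$. Since $K$ is abelian, $(x,y)\lhd(z,w) = (\varepsilon x+w,\varepsilon y)$ and $\rho(z,w)=(\varepsilon z,-\varepsilon w)$; expanding both summands into four $\theta$-terms each and applying the cocycle identity on $\theta$ repeatedly to cancel the terms involving $\pm z$, the remainder collapses to $2y\theta(x,w)$.

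It remains to verify Property FR (Definition \ref{def8}) and $f$-link homotopy. For $f$-link homotopy, substituting the explicit formula $\overline{\phi}=2y\theta(x,w)$ into \eqref{kkk4} reduces it to a linear identity in $y$ and $\theta$ that holds because $\Z/p$ is abelian and $a\cdot g$ in \eqref{popo} acts purely additively in the $\delta$-coordinate; this should be a routine check. For Property FR, the plan is to use the explicit formula together with the coloring constraint $\mathcal{C}(\gamma) \in \mathrm{Ann}^{\pm 1}(A_{a_1,\dots,a_n})$ from the proof of Theorem \ref{m3nthm4} (which imposes a prescribed $\delta$-sum to vanish), and to show that the sum of the $2y\theta(x,w)$-weights over the $2n$ crossings of the Fenn-Rourke circle $\gamma$ with the strands $\alpha_i$ (plus the framing self-crossings) equals the sum over the new crossings introduced by the $\mp 1$-twist on the $n$ parallel strands in $D'$. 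I expect the main obstacle to be this final bookkeeping: propagating the coloring of $\gamma$ along the strands and tracking how the $\varepsilon$-signs combine with the Witt-cocycle terms, so that the Witt identity collapses both sums to the same expression.
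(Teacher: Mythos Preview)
Your plan matches the paper's approach, which is simply the assertion ``by a direct computation, we can easily show that'' with no further details; the paper gives no proof of Proposition~\ref{prop28} beyond that sentence. Your outline is a reasonable unpacking of what such a direct computation must contain: verifying that $\theta$ is the (twisted) Witt cocycle, checking the compatibility $\lambda(x)\theta(a,b)=\lambda(f(x))\theta(f(a),f(b))$, collapsing $\overline{\phi_{\lambda,\theta}}$ to $2y\,\theta(x,w)$ via the cocycle relation on $\theta$, and then checking \eqref{kkk4} and Property~FR by hand. One small caution on the last step: in your Fenn-Rourke bookkeeping, remember that since $K=\Z/p$ is abelian the skew-rack is $f$-link homotopic by Lemma~\ref{exa3434}, and by Remark~\ref{exa33355} the coloring set is essentially $\Hom(H_1(M_D;\Z),K)\times K^{\sharp D}$; exploiting this abelian simplification up front should make the Fenn-Rourke weight-sum comparison short rather than a genuine obstacle.
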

\begin{exa}\label{exa18}
Let $ D$ be the Hopf link with framings $(n,m)$, as in Example \ref{exa3332}.
Recall that $M_D$ is the lens space $L(nm-1, m)$.
By \eqref{pp777}, if $ nm-1$ is divisible by $p$ and $K=\Z/p$, then $\mathrm{Col}_X(D)$ is bijective to $ (\Z/p)^2$.
In addition, we can easily show that the cocycle invariant $ \Phi_{ D} : (\Z/p)^2\ra \Z/p$ is equal to the correspondence $(x,y) \mapsto -m x^2 $, where we use the 2-cocycle $ \overline{\phi_{\lambda, \theta}} $ in Proposition \ref{prop28}. For instance, 
the invariant can distinguish between the lens spaces $L(11,1)$ and $L(1 1,3)$, which are not homotopy equivalent. 

More generally, consider the lens space $L( p,q)$, and a framed diagram $ D_{p,q}$ such that $M_{D_{p,q}} = L(p,q)$.
Then, with the help of a computer program, if $p,q <100$, it is not so hard to check that
the cocycle invariant $ \Phi_{ D_{p,q}} : (\Z/p)^{1+ \# D_{p,q}}\ra \Z/p$ is FR-equivalent to the map $\Z/p \ra \Z/p ; x \mapsto -q x^2 $.
\end{exa}
From this example, it is natural to pose a problem below, together with a relation to the Dijkgraaf-Witten invariant \cite[\S 6]{DW}.
For this, let us briefly review the invariant.
Fix a closed 3-manifold $M$ with the fundamental homology 3-class $[M] \in H_3(M;\Z) \cong \Z$.
Let $K$ be a group of finite order, and $ \psi: K^3 \ra A$ be a group 3-cocycle.
Denote by $ BK$ the classifying space of $K$ or the Eilenberg-MacLane space of type $(K,1)$,
and by $c_M: M \ra B\pi_1(M)$ be a classifying map.
Then, any group homomorphism $f : \pi_1(M) \ra K$ induces a continuous map $f_* : B \pi_1(M) \ra BK$.
Since the (co)-homology of $BK $ equals that of $K$, we can define the pullback $ (f_* \circ c_M)^*(\psi)$ as a 3-cocycle of $M$.
Then, {\it the Dijkgraaf-Witten invariant} is defined as the map
$$ \mathrm{DW}_{\psi}(M): \Hom(\pi_1(M),K) \lra A; \ \ \ f \longmapsto \langle (f_* \circ c_M)^*(\psi) , [M]\rangle,$$
where $\langle ,\rangle $ is the Kronecker map.

\begin{prob}\label{p3p64}As in Example \ref{ss2}, let $X$ be the symmetric skew-rack on $K \times K$.
Let $\lambda: K \ra A$ and $\theta :K^2 \ra A$ be group cocycles,
and $ \psi$ be the cup product $\lambda \smile \theta$ as a group 3-cocycle.
Let $D$ be a framed link diagram. 

Then, is there a bijection $\mathcal{B} : \mathrm{Col}_X(D) \simeq
\Hom(\pi_1(M),K)
\times \mathrm{Ann}(X)^{\sharp D}$? Furthermore, find a condition such that the birack 2-cocycle $\overline{\phi_{\lambda,\theta}}$ in \ref{lhm8} has Property FR, and FR-equivalence between the cocycle invariant $\Phi :\mathrm{Col}_X(D) \ra A $ and the Dijkgraaf-Witten invariant $ \mathrm{DW}_{\psi}(M_D) $. 
\end{prob}



If this problem is positively solved, we consequently obtain a diagrammatic computation of
the Dijkgraaf-Witten invariant via the cocycle invariants and Dehn surgery.


\section{Criteria for 3-manifolds which are not the surgery of any knot}\label{SCG211}
As an application of the cocycle invariant, we will give two criteria to detect some 3-manifolds, which are not the results of surgery of any knot in $S^3$; see, e.g., \cite{HKMP}, \cite[Section 7.1]{AFW} and references therein for the details of such 3-manifolds, and such other criteria.
For this, as in Example \ref{exa28}, we fix groups $N \unlhd K$, and $X= K \times N$ with $f= \id_K $;
recall that $X$ has a skew rack by $(x,a)\lhd (y,b)=( x y^{-1} b y ,a)$, and has Property FR.

\begin{prop}\label{pp15445}
Suppose $|K| < \infty $ and that a framed link diagram $D$ and a knot diagram of framing zero 
are related by a sequence of Fenn-Rourke moves and isotopy.
Then, the invariant $ |\mathrm{Col}_X(D)|/|K|^{\sharp D} \in \Q $ in Theorem \ref{m3nthm4} is larger than $|N|$ or equal to.
\end{prop}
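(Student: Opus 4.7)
The plan is to reduce the claim to a question about $0$-framed knots and then to produce explicit colorings by a level-function construction. Since $V(M) := |\mathrm{Col}_X(D)|/|K|^{\sharp D}$ is by Theorem~\ref{m3nthm4} a topological invariant of closed $3$-manifolds and $D$ is related to the $0$-framed knot diagram $D'$ (for which $\sharp D' = 1$) by Fenn-Rourke moves and isotopy, the proposition reduces to proving $|\mathrm{Col}_X(D')| \geq |N| \cdot |K|$.

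Two observations let me split this estimate. First, because $\kappa = \mathrm{id}_X$ and $(x,a) \lhd (y,b) = (xy^{-1}by, a)$ both fix the second coordinate, this coordinate is constant along the unique component of $D'$, yielding a partition $\mathrm{Col}_X(D') = \bigsqcup_{n \in N} C_n(D')$ where $C_n(D')$ collects the colorings whose second coordinate is identically $n$. Second, for any $z \in K$, right-translation $\mathcal{C}(s) \mapsto (\mathcal{C}(s)_K \cdot z,\ \mathcal{C}(s)_N)$ preserves the crossing relations (the $z^{\pm 1}$ introduced on either side cancel past the inner $n$) and is manifestly free. Hence $|C_n(D')| \in \{0\} \cup |K| \cdot \mathbb{Z}_{\geq 1}$, and it suffices to exhibit one coloring in each $C_n(D')$.

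For the construction I introduce a level function $w \colon \{\text{semi-arcs of } D'\} \to \mathbb{Z}$ by fixing $w(s_0) = 0$ at a basepoint, demanding $w$ be constant through over-strands, and imposing $w(\delta_\tau) = w(\alpha_\tau) + \varepsilon_\tau$ at each under-passage through a crossing $\tau$ of sign $\varepsilon_\tau \in \{\pm 1\}$. Traversing the knot once accumulates an increment equal to the writhe of $D'$; because the paper permits only the doubled Reidemeister move of type I, the framing is recorded as the writhe, so the $0$-framing hypothesis forces this increment to vanish and $w$ closes up. Setting $\mathcal{C}_n(s) := (n^{w(s)},\, n)$, the identity
\[
\mathcal{C}_n(\alpha) \lhd \mathcal{C}_n(\beta) \ =\ \bigl( n^{w(\alpha)} \cdot n^{-w(\beta)} \cdot n \cdot n^{w(\beta)},\ n \bigr) \ =\ \bigl( n^{w(\alpha)+1},\, n \bigr) \ =\ \mathcal{C}_n(\delta)
\]
is immediate at positive crossings, and the negative-crossing version with $\lhd^{-1}$ is parallel. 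Hence $C_n(D') \neq \emptyset$ for every $n \in N$, so $|C_n(D')| \geq |K|$; summation over $n$ yields $|\mathrm{Col}_X(D')| \geq |N| \cdot |K|$.

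The step I expect to be most delicate is the sign bookkeeping linking the $0$-framing hypothesis to the vanishing of the accumulated $w$-increment: one has to match the $\pm 1$ contributed at each under-passage by the rules for $\lhd$ and $\lhd^{-1}$ against the writhe, and identify writhe with framing in this paper's diagrammatic convention. With that bookkeeping in place, the rest is direct verification.
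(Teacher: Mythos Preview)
Your argument is correct and follows essentially the same route as the paper. The paper reduces to a $0$-framed knot diagram and directly writes down, for each $(g,h)\in K\times N$, the coloring $\mathcal{C}_{g,h}(\alpha_i)=(h^{\sum_{j<i}\varepsilon_j}g,\,h)$ along the semi-arcs, noting that the over-arcs $\beta_i$ then carry $(h^{n_i}g,h)$ for suitable $n_i$; this is precisely your coloring $(n^{w(s)},n)$ right-translated by $g$, with your level function $w$ playing the role of the running signed sum $\sum_{j<i}\varepsilon_j$. Your decomposition into a free $K$-action plus one explicit coloring per $n\in N$ is a clean repackaging of the same construction rather than a different method. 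One small remark on bookkeeping: in the paper's labeling the relation $\delta_\tau=\alpha_\tau\lhd\beta_\tau$ holds at \emph{every} crossing, so $w(\delta_\tau)=w(\alpha_\tau)+1$ always, and the sign $\varepsilon_\tau$ enters through the direction of traversal (at a negative crossing one walks from $\delta_\tau$ to $\alpha_\tau$); your formula $w(\delta_\tau)=w(\alpha_\tau)+\varepsilon_\tau$ presupposes the alternative convention where $\alpha$ is always the incoming under-arc, but either way the accumulated increment is the writhe and the verification goes through.
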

\begin{proof}We may suppose that $D$ is a knot diagram of framing zero.
For the proof, it is sufficient to construct $|K \times N|$ colorings on $D$.
As in Figure \ref{logifig2}, take semi-arcs $\alpha_i$ and $\beta_i$ in $D$, and denote by $\varepsilon_i \in \{ \pm 1 \} $ the sign of the crossing between $\alpha_i$ and $\beta_i$. For $(g,h) \in K \times N$, we define $\mathcal{C}_{g,h} (\alpha_i)$ to be $ (h^{\sum_{j=1}^{i-1} \varepsilon_j  } g,h)\in X= K\times N$.
Since every $\beta_i$ lies on the same link component, $\mathcal{C}_{g,h} (\beta_i)=( h^{n_i}g,h)$ for some $n_i\in \Z$.
Hence, we can easily check that $\mathcal{C}_{g,h}$ defines an $X$-coloring as required.
\end{proof}
\begin{figure}[tpb]
\begin{center}
\begin{picture}(50,74)
\put(-68,25){\large \ \ \ \ $\alpha_1 $}
\put(-13,24){\large $\alpha_2 $}
\put(14,24){\large $\alpha_3 $}

\put(-66,37){\pcc{longitude3}{0.34}}

\put(-39,50){\large $\beta_1 $}
\put(-6,50){\large $\beta_2 $}
\put(69,50){\large $\beta_{N_j} $}
\put(33,46){\large $\cdots $}
\end{picture}
\end{center}
\vskip -1.7pc
\caption{\label{logifig2} Semi-arcs $\alpha_i$ and $\beta_i$ in the knot diagram $D$. }
\end{figure}
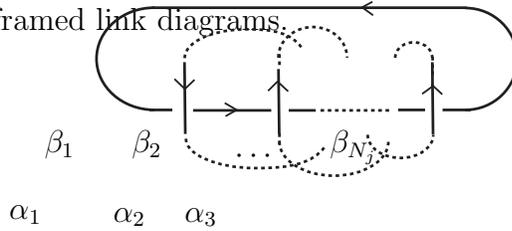

As a special case, let $K=N = \Z/2$.
For $k_1,k_2,k_3 \in \Z/2$, let us define a map $\phi_{k_1,k_2,k_3} : X \times X \ra \Z/2$ by setting
\[ \phi_{k_1,k_2,k_3} ((x,a),(y,b)) = k_1 a +k_2 b +k_3 ab . \]
Then, by direct computation, it is not hard to show the following:
\begin{prop}\label{pp155}
The map is a symmetric birack 2-cocycle with Property FR, and is $f$-link homotopic.
Furthermore, if a framed link diagram $D$ is FR-equivalent to a knot diagram of framing zero, then the symmetric birack 2-cocycle invariant is trivial.
\end{prop}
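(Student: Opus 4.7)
The plan is to verify, in order, that $\phi_{k_1,k_2,k_3}$ is a birack 2-cocycle, that it is symmetric, that it satisfies Property FR, that it is $f$-link homotopic, and finally that the induced invariant vanishes on any diagram FR-equivalent to a framing-zero knot diagram. Throughout, I exploit the concrete structure $X = \Z/2 \times \Z/2$ with $\kappa = \rho = \id_X$ (since $f = \id_K$ and we work in characteristic $2$) and $(x,a) \lhd (y,b) = (xb, a)$ (since $K$ is abelian), together with the collapse of crossing signs $\varepsilon_\tau \in \{\pm 1\}$ to $1 \in \Z/2$. By linearity in $(k_1,k_2,k_3) \in (\Z/2)^3$, every algebraic axiom can be verified for the three indicator cocycles separately.

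For the cocycle axiom, the identity $\phi(b,c) = \phi(\kappa(b),\kappa(c))$ in (\ref{condition}) is immediate from $\kappa = \id_X$, while a direct computation shows both sides of the main cocycle identity collapse to $k_2(a_2+a_3) + k_3 a_1(a_2+a_3) \in \Z/2$, writing $\alpha_j = (x_j, a_j)$ for $j=1,2,3$. The symmetric axiom reduces to $\phi(\alpha,\beta) = \phi(\alpha \lhd \beta, \beta)$ (using $\rho = \id_X$ and $-1 = 1$), which holds because $\lhd$ preserves the second coordinate and $\phi$ does not see first coordinates. For $f$-link homotopy, the identity (\ref{kkk4}) is tautological: the subgroup $\mathrm{Inn}^{\mathrm{even}}_{\kappa}(X)$ acts by multiplication on first coordinates only, so $a$ and $a \cdot g$ share a second coordinate, whence both sides of (\ref{kkk4}) collapse to $2(k_1+k_2+k_3)\alpha = 0$.

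The main technical hurdle is Property FR. For the Fenn-Rourke move in Figure \ref{kdn66}, the closure constraint of Theorem \ref{m3nthm4} forces the color $(u,v)$ of the circle $\gamma$ to satisfy $v = \sum_i a_i$ in $\Z/2$, where $(x_i, a_i)$ are the colors of the strands $\alpha_i$ passing through. Since $\phi_{k_1,k_2,k_3}$ depends only on second coordinates, the weight of each crossing reduces to a simple polynomial in the $a_i$'s and $v$. I would carry out a careful accounting of all crossings introduced or removed by the move --- including the $\alpha_i$--$\gamma$ crossings, the framing self-crossing of $\gamma$, and any induced twist crossings and framing kinks on the $D'$ side --- and verify that the total weight difference vanishes modulo $2$ after imposing the closure constraint. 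This bookkeeping is the main obstacle and requires a case-by-case analysis of the FR configuration.

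For the triviality statement, consider a single-component knot diagram $D_0$ of framing zero. Since $\lhd$ preserves the second coordinate and $D_0$ is connected, every $X$-coloring $\mathcal{C}$ assigns a common second coordinate $a \in \Z/2$ to all semi-arcs. At every crossing, both strands have second coordinate $a$, so $\phi$ evaluates to $k_1 a + k_2 a + k_3 a^2 = (k_1+k_2+k_3) a$ (using $a^2 = a$ in $\Z/2$). Crossing signs being trivial, $\Phi_{D_0}(\mathcal{C}) = c \cdot (k_1+k_2+k_3) a$, where $c$ is the total number of crossings. Framing zero forces $w(D_0) = 0$, so positive and negative crossings are equinumerous and $c$ is even; hence $\Phi_{D_0} \equiv 0$. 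The general case follows at once from Proposition \ref{prop8}: if $D$ is FR-equivalent to $D_0$ then $\Phi_D$ and $\Phi_{D_0}$ are FR-equivalent, and $\Phi_{D_0} \equiv 0$ implies $\Phi_D \equiv 0$.
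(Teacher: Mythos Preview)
Your approach is correct and matches the paper's, which offers no proof beyond the phrase ``by direct computation, it is not hard to show.'' Your explicit verifications of the cocycle, symmetric, and $f$-link homotopy conditions are accurate, and your argument for triviality on framing-zero knot diagrams (constant second coordinate $a$, each crossing contributing $(k_1+k_2+k_3)a$, even crossing count from writhe zero) is a clean and complete argument that the paper does not spell out; the only part you leave as a sketch is the Property~FR bookkeeping, which the paper likewise omits.
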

However, unfortunately, the author does not find new examples of
framed link diagrams that are not FR-equivalent to any knot diagram of framing zero.
We end this paper by giving problems as future works.
\begin{prob} As applications from the propositions above,
find 3-manifolds that are not the surgery of any knot of framing zero. 
Establish stronger criteria than the propositions above, which are applicable to many framed link diagrams.
\end{prob}

\vskip 1pc

\normalsize

DEPARTMENT OF
MATHEMATICS
TOKYO
INSTITUTE OF
TECHNOLOGY
2-12-1
OOKAYAMA
, MEGURO-KU TOKYO
152-8551 JAPAN

\end{document}